\newcommand{\R}{\mathbb{R}}
\DeclareMathOperator{\id}{Id}
\DeclareMathOperator{\vol}{vol}
\theoremstyle{plain}
\newtheorem{theorem}{Theorem}[section]
\newtheorem{proposition}[theorem]{Proposition}
\newtheorem{corollary}[theorem]{Corollary}
\newtheorem*{corollary*}{Corollary}
\newtheorem{question}{Question}
\theoremstyle{definition}
\newtheorem{definition}[theorem]{Definition}
\newtheorem{example}[theorem]{Example}
\theoremstyle{remark}
\newtheorem{remark}[theorem]{Remark}
\title{Fair and Tolerant (FAT) Graph Colorings}
\author{Lies Beers\thanks{e.g.m.beers@vu.nl} }
\author{Raffaella Mulas}
\date{}
\affil{Vrije Universiteit Amsterdam}
\begin{document}

\maketitle

\begin{abstract} 
We introduce and study Fair and Tolerant colorings (FAT colorings), where each vertex tolerates a given fraction of same-colored neighbors while fairness is preserved across the other coloring classes. Moreover, we define the FAT chromatic number $\chi^{\mathrm{FAT}}(G)$ as the largest integer $k$ for which $G$ admits a FAT $k$-coloring. We establish general bounds on $\chi^{\mathrm{FAT}}$, relate it to structural and spectral properties of graphs, and characterize it completely for several families of graphs. We conclude with a list of open questions that suggest future directions.
\end{abstract}

\section{Introduction}

\subsection*{Historical note}

In 1888, in a letter to his sister Willemien, Vincent van Gogh wrote \cite{VanGogh1888}: 

\begin{quote}
        \emph{One can speak poetry just by arranging colors well, just as one can say comforting things in music.}
\end{quote}

Van Gogh was referring to painting, but his reflection also captures the essence of graph coloring, whose history began 36 years earlier. \newline 

In fact, while graph theory itself was born in 1736 with Leonhard Euler's solution to the Königsberg bridges problem \cite{Euler1736}, the first problem in graph coloring was formulated in 1852, when Francis Guthrie conjectured that four colors are sufficient to color any map so that no two adjacent regions share the same color \cite{Voloshin2009,Jensen2011,Maritz2012,Wilson2013}. Guthrie's conjecture soon became one of the most famous open problems in mathematics, until 1976, when Kenneth Appel and Wolfgang Haken confirmed its validity with the first major computer-assisted proof in mathematics \cite{Appel1977}. Since then, classical graph coloring and its many variants have developed into central topics in graph theory, inspiring deep theorems and a wide range of applications. \newline 

Today, echoing Van Gogh's words, graph theorists know that arranging colors well is indeed a form of poetry. 

\subsection*{The idea behind FAT colorings}

Classical graph coloring requires that adjacent vertices always receive different colors. Here, we relax this rule by allowing a controlled amount of \emph{tolerance}: a vertex may share its color with a fixed fraction of its neighbors, while the remaining neighbors are distributed \emph{fairly} among the other colors. Hence the name \emph{Fair and Tolerant} (FAT) colorings. \newline

This framework captures situations where strict separation is not necessary, but balance is essential. For example, in social or biological networks, vertices can represent individuals, edges can represent interactions, and colors can represent groups or traits: \emph{tolerance} corresponds to homophily (the tendency of similar individuals to connect), while \emph{fairness} reflects diversity across different groups.

\subsection*{Structure of the paper}

In Section \ref{section:definitions} we introduce the formal definition of FAT colorings and establish their fundamental properties.  Section \ref{section:spectral} explores spectral aspects, relating the existence of FAT colorings to the eigenvalues of graph operators.  Moreover, in Section \ref{section:Turan} we characterize the FAT chromatic number of regular Turán graphs. Finally, Section \ref{section:irr} studies irreducible FAT colorings, while Section \ref{section:questions} collects a number of open questions that suggest future directions.

\section{Definitions and fundamental properties}\label{section:definitions}

Let $G=(V,E)$ be a simple graph, that is, an undirected, unweighted graph without multi-edges and without loops. For a vertex $v\in V$ and a subset $S\subseteq V$, we let
$$
e(v,S) := |\{w\in S : v\sim w\}|
$$
denote the number of neighbors of $v$ contained in $S$. Similarly, given two (not necessarily distinct) subsets $S,T\subseteq V$, we let
$$
e(S,T) := |\{\{u,v\}\in E : u\in S, v\in T\}|
$$
be the number of edges with one endpoint in $S$ and the other in $T$.\newline 

Recall that a \emph{$k$-coloring} of $G$ is a function $c:V\to \{1,\ldots,k\}$, and for each $i\in\{1,\ldots,k\}$, the set $V_i := c^{-1}(i)$ is called the \emph{coloring class} of color $i$. Moreover, the coloring is \emph{proper} if $v\sim w$ implies $c(v)\neq c(w)$. The \emph{chromatic number} (or \emph{coloring number}) $\chi=\chi(G)$ is the smallest $k$ for which there exists a proper $k$-coloring of $G$.\newline 

In the next definition, we relax the requirement of properness, allowing each vertex to tolerate a fixed fraction of neighbors in its own coloring class. Moreover, we require that the remaining neighbors are distributed evenly among the other classes.

\begin{definition}
    A vertex $k$-coloring with classes $V_1,\ldots,V_k$ is Fair and Tolerant (FAT) if there exists $\alpha\in [0,1]$ such that
    $$
    e(v,V_i)=\begin{cases}
        \alpha \deg v, &\text{if }v\notin V_i,\\
        \beta \deg v, &\text{if }v\in V_i,
    \end{cases}
    $$ where $\beta:=1-(k-1)\alpha$.
\end{definition}

Note that $\beta \in [0,1]$ is defined so that $1=(k-1)\alpha+\beta$, and therefore $$\deg v=(k-1)\alpha \deg v+\beta\deg v, \quad \text{for all } v\in V(G).$$ 
Hence, the definition of $\beta$ guarantees that all neighbors of $v$ are accounted for.\newline 

The terminology reflects the two principles behind FAT colorings. \emph{Fairness} means that every vertex assigns the same fraction $\alpha$ of its neighbors to each of the other $k-1$ coloring classes, while \emph{tolerance} means that a remaining fraction $\beta$ of its neighbors are allowed to share its own color (Figure \ref{fig:FATillustration}).

\begin{figure}[h]
    \centering
    \includegraphics[width=0.5\linewidth]{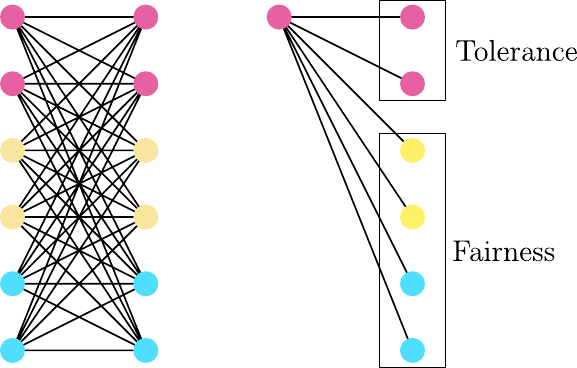}
    \caption{An illustration of the concepts of fairness and tolerance.}
    \label{fig:FATillustration}
\end{figure}

\begin{remark}\label{rmk:proper}
The case $\beta=0$ corresponds to colorings with no tolerance, that is, every vertex has no neighbors in its own coloring class. In other words, FAT colorings with $\beta=0$ are precisely proper $k$-colorings. In this case, we have $\alpha=1/(k-1)$. Such colorings are sometimes referred to as \emph{equitable} in the literature \cite{beersmulas2024}. However, since the word ``equitable'' is also used with different meanings in graph coloring, here we adopt the terminology FAT to avoid confusion.
\end{remark}

\begin{remark}
FAT colorings can be viewed as a special case of broader notions that have been studied in the literature, including \emph{domatic colorings} (colorings that are not necessarily proper, where each vertex is connected to at least one vertex of every color that is different from its own one) \cite{zelinka1981domatic}, and \emph{majority colorings} (where the parameters $\alpha$ and $\beta$ are not necessarily constant for all vertices, and the requirement is that $e(v,V_i)\leq \deg v/2$ whenever $v\in V_i$) \cite{bosek2019majority}. What distinguishes FAT colorings is the fixed fractional prescription: each vertex has exactly the same fraction $\alpha$ of neighbors in every other class, and fraction $\beta$ in its own class. 
\end{remark}

Now, note that every graph admits a trivial FAT $1$-coloring in which all vertices are placed in a single coloring class. For connected graphs, this FAT coloring corresponds to the parameters 
$$
\alpha = 0, \quad \beta = 1.
$$

It is therefore natural to ask, for a given graph $G$, what is the largest integer $k$ such that $G$ admits a FAT $k$-coloring. 

\begin{definition}
    The  \emph{FAT chromatic number} of a graph $G$, denoted by $\chi^{\textrm{FAT}}(G)$ or simply $\chi^{\textrm{FAT}}$, is the largest integer $k$ such that  $G$ admits a FAT $k$-coloring. 
\end{definition}

\begin{example}[Complete graphs]\label{ex:KN}
    For the complete graph $K_N$ on $N$ vertices, if each vertex is assigned a distinct color, then this gives a FAT coloring with parameters
$$
\beta = 0, \quad \alpha = \tfrac{1}{N-1}.
$$
Hence,
$$
\chi^{\mathrm{FAT}}(K_N) = \chi(K_N) = N.
$$
\end{example}

\begin{example}\label{ex:bipartite>=2}
For any bipartite graph, $\chi^{\mathrm{FAT}}\geq 2$, since any proper $2$-coloring is also a FAT coloring with parameters $\alpha=1$ and $\beta=0$.
\end{example}

\begin{remark}\label{rmk:merging}
Suppose that $G$ admits a FAT $k$-coloring and that $k = c \bar{k}$ for some $c,\bar{k} \in \mathbb{N}$, 
i.e., $\bar{k}$ divides $k$. Then, $G$ also admits a FAT $\bar{k}$-coloring, obtained by merging the $k$ coloring classes into $\bar{k}$ groups of $c$ classes each. We note that this divisibility property is not unique to FAT colorings: it also appears in the context of \emph{oriented edge periodic colorings}, see \cite[Theorem 4]{maximal-colouring}.
\end{remark}

\begin{proposition}\label{prop:delta+1}
Let $\delta$ be the minimum vertex degree in $G$. If $G$ admits a FAT $k$-coloring, then $k\leq \delta+1$. In particular, $$\chi^{\textrm{FAT}}\leq \delta+1,$$ and the bound is sharp.
\end{proposition}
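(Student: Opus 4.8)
The plan is to extract the bound from a single vertex of minimum degree, using that the prescribed neighbor-counts must be genuine nonnegative integers. Fix a FAT $k$-coloring with parameter $\alpha$ and classes $V_1,\dots,V_k$, and let $v$ be a vertex attaining the minimum degree, $\deg v=\delta$, say $v\in V_j$. By definition the neighbors of $v$ split across the classes as $e(v,V_i)=\alpha\delta$ for each $i\neq j$ and $e(v,V_j)=\beta\delta$, where each of these is a nonnegative integer and $\sum_{i} e(v,V_i)=\delta$.

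The main case is $\alpha>0$, where integrality does the work. For every $i\neq j$ the count $e(v,V_i)=\alpha\delta$ is a positive integer, hence $\alpha\delta\geq1$, so each of the $k-1$ classes other than $V_j$ contains at least one neighbor of $v$. Since a neighbor lies in exactly one class, these $k-1$ neighbor-sets are disjoint, and therefore
$$
\delta=\deg v\ \geq\ \sum_{i\neq j}e(v,V_i)\ =\ (k-1)\,\alpha\delta\ \geq\ k-1,
$$
which rearranges to $k\leq\delta+1$. (Equivalently, one may combine $\alpha\geq 1/\delta$ with the constraint $\beta=1-(k-1)\alpha\geq0$, i.e.\ $(k-1)\alpha\leq1$, to reach the same conclusion.)

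It remains to dispose of the degenerate case $\alpha=0$, which forces $\beta=1$: then every vertex has all of its neighbors in its own class, so each color class is a union of connected components. For a connected graph this leaves a single nonempty class, giving $k=1$ and hence the bound trivially. Taking the maximum over all FAT colorings then yields $\chi^{\textrm{FAT}}\leq\delta+1$. Finally, sharpness is witnessed by the complete graph: by \cref{ex:KN} we have $\chi^{\textrm{FAT}}(K_N)=N$, while $\delta(K_N)=N-1$, so the value $\delta+1=N$ is attained.

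I expect the counting inequality itself to be routine; the only real subtlety is the $\alpha=0$ case, where the bound does not follow from the min-degree vertex in isolation and instead relies on connectedness (and on color classes being nonempty, so that empty classes cannot artificially inflate $k$). It is worth flagging explicitly that the argument in the main case uses integrality of $e(v,V_i)$ in an essential way: without it, $\alpha\delta$ could be an arbitrarily small positive number and no bound on $k$ would follow.
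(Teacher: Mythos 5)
Your argument is correct and follows essentially the same route as the paper's: the paper's entire proof is the one-line observation that a FAT $k$-coloring forces $\deg v \geq k-1$ for every vertex $v$, plus sharpness via the complete graph (Example~\ref{ex:KN}); your main case ($\alpha>0$) is exactly this observation, made explicit through the integrality of the counts $e(v,V_i)$. Where you go beyond the paper is the case $\alpha=0$, and this is worth keeping: the paper's ``clearly $\deg v\geq k-1$'' silently assumes that every vertex has at least one neighbor in each other class, which is false when $\alpha=0$. You correctly note that this case needs connectedness, and in fact the bound genuinely fails without it --- for the disjoint union of $m\geq 3$ copies of $K_2$, coloring each component with its own color gives a FAT $m$-coloring with $\alpha=0$, $\beta=1$, while $\delta+1=2$. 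So your extra case analysis is not pedantry; it isolates a hypothesis (connectedness, or $\alpha>0$) that the paper's one-line proof glosses over.
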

\begin{proof}
If $G$ admits a FAT $k$-coloring, then clearly $\deg v\geq k-1$ for all $v\in V$. Therefore, $k\leq \delta+1$ and, in particular, $\chi^{\textrm{FAT}}\leq \delta+1$. The fact that the latter bound is sharp follows from Example \ref{ex:KN}.
\end{proof}

An immediate consequence of Proposition \ref{prop:delta+1} is the following.

\begin{corollary}
    For every graph $G$ on $N$ vertices,
    $$\chi^{\textrm{FAT}}\leq N,$$
    with equality if and only if $G=K_N$.
\end{corollary}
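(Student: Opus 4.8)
The plan is to derive both the inequality and the equality characterization directly from \cref{prop:delta+1}, with essentially no additional work. First I would record the elementary fact that in any simple graph on $N$ vertices every vertex has degree at most $N-1$, so in particular the minimum degree satisfies $\delta \leq N-1$. Chaining this with the bound $\chi^{\textrm{FAT}} \leq \delta + 1$ from \cref{prop:delta+1} gives
$$
\chi^{\textrm{FAT}} \leq \delta + 1 \leq N,
$$
which establishes the asserted upper bound immediately.

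For the equality statement I would argue the two directions separately. The implication $G = K_N \Rightarrow \chi^{\textrm{FAT}}(G) = N$ is exactly the content of \cref{ex:KN}, so nothing new is needed there. For the converse, suppose $\chi^{\textrm{FAT}}(G) = N$. Then the chain $N = \chi^{\textrm{FAT}} \leq \delta + 1 \leq N$ must be a chain of equalities, which forces $\delta = N-1$. Since no vertex of a simple graph on $N$ vertices can have degree exceeding $N-1$, every vertex then has degree exactly $N-1$, i.e.\ is adjacent to all of the other $N-1$ vertices; hence $G = K_N$.

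I do not anticipate any real obstacle, as the statement is a direct corollary of \cref{prop:delta+1}. The only step deserving a moment of care is the forcing argument: one must note explicitly that $\delta = N-1$ in a simple graph means every vertex is adjacent to every other vertex, which is precisely the defining property of $K_N$. Beyond this, the proof reduces to chaining two elementary inequalities and invoking the already-computed value $\chi^{\textrm{FAT}}(K_N) = N$.
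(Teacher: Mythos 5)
Your proof is correct and follows exactly the same route as the paper's: chaining $\chi^{\textrm{FAT}}\leq\delta+1\leq N$ via \cref{prop:delta+1}, using \cref{ex:KN} for the complete graph, and observing that $\delta=N-1$ forces $G=K_N$. You simply spell out the forcing argument in more detail than the paper's one-line proof (which, incidentally, contains the typo ``$\delta\geq N-1$'' where $\delta\leq N-1$ is meant).
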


\begin{proof}
    The claim follows from Example \ref{ex:KN}, from Proposition \ref{prop:delta+1} and from the fact that $\delta\geq N-1$, with equality if and only if $G=K_N$.
\end{proof}

\begin{example}[Trees]\label{ex:trees}
Let $G$ be a bipartite graph with minimum degree $\delta=1$ (for instance, the path graph, or more generally any tree). By Proposition \ref{prop:delta+1}, $\chi^{\mathrm{FAT}} \leq 2$. 
Moreover, since $\chi(G)=2$ and any proper $2$-coloring is also a FAT coloring (with parameters $\alpha=1$ and $\beta=0$), it follows that $\chi^{\mathrm{FAT}}=2$. Thus, the bound in Proposition \ref{prop:delta+1} holds with equality in this case.
\end{example}

\begin{figure}[h]
    \centering
    \includegraphics[width=4.5cm]{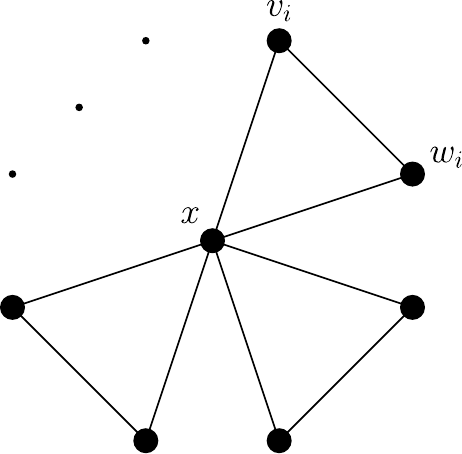}
    \caption{The petal graph}
    \label{fig:petal}
\end{figure}

\begin{example}[Petal graphs]\label{ex:petalgraphs}
Given $m\geq 1$, the $m$--\emph{petal graph} is the graph on $N=2m+1$ nodes such that (Figure \ref{fig:petal}):
\begin{itemize}
    \item $V=\{x,v_1,\ldots,v_m,w_1,\ldots,w_m\}$;
    \item $E=\{(x,v_i)\}_{i=1}^m\cup \{(x,w_i)\}_{i=1}^m\cup\{(v_i,w_i)\}_{i=1}^m$.
\end{itemize}

In this case, $\delta=2$, therefore Proposition \ref{prop:delta+1} gives $\chi^{\mathrm{FAT}} \leq 3$. Now, observe that there exists a FAT $3$-coloring of the vertices, obtained for instance by assigning color $1$ to $x$, color $2$ to all vertices $v_1,\ldots,v_m$, and color $3$ to all vertices $w_1,\ldots,w_m$. This coloring satisfies the FAT conditions with $\alpha=1/2$ and $\beta=0$. Hence, $\chi^{\mathrm{FAT}} = 3$, and the bound in Proposition \ref{prop:delta+1} holds with equality also in this case.

\end{example}

In regular graphs, FAT colorings impose strong symmetry on the coloring classes:

\begin{theorem}\label{thm:regular}
    Let $G$ be a connected regular graph on $N$ vertices. For any FAT $k$-coloring of $G$, all coloring classes have the same size. In particular, each class has size $N/k$, and therefore $k$ divides $N$.
\end{theorem}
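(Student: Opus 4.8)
The plan is to prove equality of class sizes by a double-counting argument on the edges running between two distinct coloring classes, exploiting the fact that regularity makes $\deg v$ constant. Write $d$ for the common degree, so $\deg v = d$ for every $v \in V$. I would fix two distinct colors $i \neq j$ and compute $e(V_i, V_j)$ in two ways, once from each side of the bipartition $(V_i, V_j)$.

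Counting from the $V_i$ side: every $v \in V_i$ satisfies $v \notin V_j$, so the FAT condition gives $e(v, V_j) = \alpha \deg v = \alpha d$. Summing over $v \in V_i$ yields
$$e(V_i, V_j) = \sum_{v \in V_i} e(v, V_j) = |V_i|\,\alpha d.$$
Counting the same quantity from the $V_j$ side, each $w \in V_j$ has $w \notin V_i$ and hence $e(w, V_i) = \alpha d$, so $e(V_i, V_j) = |V_j|\,\alpha d$. Comparing the two expressions gives
$$|V_i|\,\alpha d = |V_j|\,\alpha d.$$
Provided $\alpha d \neq 0$, I can cancel to obtain $|V_i| = |V_j|$. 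Since $i,j$ were arbitrary, all classes share a common size; as they partition the $N$ vertices into $k$ classes, this size must be $N/k$, and in particular $k \mid N$.

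What remains is the degenerate factor $\alpha d = 0$, which I expect to be the only real subtlety, since the double-counting identity itself is immediate. If $G$ is a single vertex then $d = 0$, $k = 1$, and the claim is trivial; otherwise connectedness forces $d \geq 1$, so the only obstruction is $\alpha = 0$. But $\alpha = 0$ means $e(V_i,V_j) = 0$ for all $i \neq j$, i.e.\ there are no edges between distinct classes. Since $G$ is connected, any path joining vertices of two different nonempty classes would have to cross between classes, a contradiction; hence at most one class is nonempty, which for a genuine $k$-coloring (using all $k$ colors) forces $k = 1$, and the conclusion again holds vacuously. This is precisely the point at which connectedness, rather than mere regularity, is needed: it is what rules out the division-by-zero and guarantees that a FAT $k$-coloring with $k \geq 2$ necessarily has $\alpha \neq 0$.
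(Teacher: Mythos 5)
Your proof is correct and follows essentially the same route as the paper: double-count $e(V_i,V_j)$ from both sides using regularity to get $|V_i|\,\alpha d = |V_j|\,\alpha d$, then use connectedness to rule out $\alpha = 0$ when $k>1$. You spell out the $\alpha=0$ degenerate case in a bit more detail than the paper does, but the argument is the same.
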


\begin{proof}

Assume that $G$ is $d$-regular, and consider a FAT $k$-coloring with classes $V_1,\ldots,V_k$. If $k=1$, then there is only one coloring class and the claim is trivial. If $k>1$, then $\alpha\neq 0$ since $G$ is connected. In this case, for $i\neq j$ we have
$$
e(V_i,V_j)=\sum_{v\in V_i} e(v,V_j)=|V_i| \cdot \alpha \cdot d,
$$ and similarly
$$
e(V_i,V_j)=\sum_{w\in V_j} e(w,V_i)=|V_j| \cdot \alpha \cdot d.
$$
Therefore, $$
|V_i| \cdot \alpha \cdot d=|V_j| \cdot \alpha \cdot d.
$$ Since $\alpha \neq 0$, this implies that $|V_i|=|V_j|$. Hence each class has size $N/k$, and $k$ divides $N$.
\end{proof}

\begin{remark}
    The statement of Theorem \ref{thm:regular} relies on both regularity and connectedness. \newline
For non-regular graphs, the coloring classes of a FAT $k$-coloring do not need to have the same size. For instance, in the star graph $K_{1,n}$, the center and the leaves can belong to classes of different cardinalities while still satisfying the FAT conditions (Figure \ref{fig:star}).

\begin{figure}[h]
    \centering
    \includegraphics[scale=0.66]{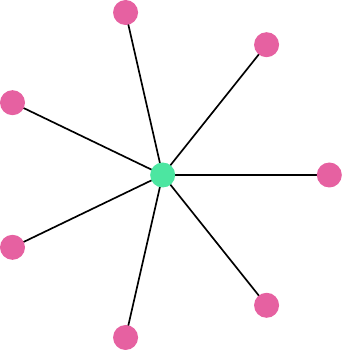}
    \caption{A FAT $2$-coloring of the star graph $K_{1,7}$.}
    \label{fig:star}
\end{figure}

To see that connectedness is also necessary, consider a disconnected graph obtained as the disjoint union
of two $d$-regular graphs with different orders, say $G=G_1 \sqcup G_2$ with $|V(G_1)|\neq |V(G_2)|$. If we color each component monochromatically and let $V_1=V(G_1)$ and $V_2=V(G_2)$ be the coloring classes, then every vertex has all neighbors in its own coloring class. Therefore, this gives a FAT $2$-coloring with $\alpha=0$ and $\beta=1$, in which the coloring classes have different sizes.
\end{remark}

We now consider non-regular graphs. In the particular case where the greatest common divisor of the vertex degrees is equal to $1$, the possibilities for FAT colorings are very restricted, as the next theorem shows.

\begin{theorem}\label{thm:gcd}
    Let $G$ be a connected graph such that
    $$
    \gcd \{\deg v : v \in V\}=1.
    $$ Then, exactly one of the following two cases holds:

    \begin{enumerate}
    \item $\chi^{\textrm{FAT}}=1$ and $G$ is not bipartite.
        \item $\chi^{\textrm{FAT}}=2$ and $G$ is bipartite.
    \end{enumerate}
\end{theorem}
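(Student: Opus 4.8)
The plan is to exploit the fact that the single free parameter $\alpha$ is forced to be an integer once $\gcd\{\deg v\}=1$. Suppose $G$ admits a FAT $k$-coloring with $k\geq 2$ and (non-empty) classes $V_1,\dots,V_k$. The starting observation is purely arithmetic: every quantity $e(v,V_i)$ is a non-negative integer, being a count of neighbors. Since $k\geq 2$, each vertex $v$ lies outside at least one class $V_i$, and for such an $i$ the FAT condition gives $e(v,V_i)=\alpha\deg v$. Hence $\alpha\deg v\in\mathbb{Z}$ for every $v\in V$.

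Next I would invoke Bézout's identity. Because $\gcd\{\deg v:v\in V\}=1$, there are vertices $v_1,\dots,v_m$ and integers $a_1,\dots,a_m$ with $\sum_{j=1}^m a_j\deg v_j=1$. Multiplying by $\alpha$ and regrouping,
$$
\alpha=\sum_{j=1}^m a_j\,(\alpha\deg v_j),
$$
which is an integer since each $\alpha\deg v_j\in\mathbb{Z}$. As $\alpha\in[0,1]$, this forces $\alpha\in\{0,1\}$.

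It then remains to treat the two values. If $\alpha=0$ then $\beta=1$, so every vertex has all of its neighbors in its own class; since $G$ is connected this makes the coloring monochromatic, contradicting the assumption that all $k\geq 2$ classes are non-empty. Therefore $\alpha=1$, whence $\beta=1-(k-1)=2-k$. The constraint $\beta\geq 0$ forces $k\leq 2$, so $k=2$ and $\beta=0$. By Remark \ref{rmk:proper}, a FAT coloring with $\beta=0$ is exactly a proper coloring, so $G$ admits a proper $2$-coloring and is therefore bipartite. Conversely, Example \ref{ex:bipartite>=2} shows that every bipartite graph satisfies $\chi^{\mathrm{FAT}}\geq 2$. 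Combining these: a non-bipartite $G$ admits no FAT $k$-coloring with $k\geq 2$, so (as the trivial $1$-coloring always exists) $\chi^{\mathrm{FAT}}=1$, which is case $1$; a bipartite $G$ has $\chi^{\mathrm{FAT}}=2$, which is case $2$. Since connectivity makes ``bipartite'' and ``non-bipartite'' a genuine dichotomy, exactly one case holds.

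The conceptual heart, and the only genuinely nontrivial step, is the passage from ``$\alpha\deg v$ is an integer for all $v$'' to ``$\alpha$ is an integer'' via $\gcd=1$; everything afterwards is bookkeeping. The one subtlety I would flag is the convention that all $k$ classes are non-empty, which is what lets the connectivity argument rule out $\alpha=0$ for $k\geq 2$ and keeps $\chi^{\mathrm{FAT}}$ finite as in Proposition \ref{prop:delta+1}.
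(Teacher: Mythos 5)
Your proof is correct and takes essentially the same route as the paper's: both arguments reduce the theorem to showing that $\gcd\{\deg v : v\in V\}=1$ forces $\alpha\in\{0,1\}$, and then handle $\alpha=0$ via connectivity and $\alpha=1$ via $\beta=2-k\ge 0$. The only cosmetic difference is that you obtain the integrality of $\alpha$ from B\'ezout's identity, whereas the paper writes $\alpha=r/s$ in lowest terms and shows $s$ divides every degree; your variant is marginally more self-contained since it does not need the preliminary remark that $\alpha$ is rational.
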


\begin{proof} Fix  a FAT $\chi^{\textrm{FAT}}$-coloring of $G$, with parameter $\alpha$. We consider two cases.

\begin{itemize}
    \item Case 1: $\alpha=0$. Since $G$ is connected, this implies that $\chi^{\textrm{FAT}}=1$. In particular, $G$ cannot be bipartite, as otherwise it would admit a (proper) FAT $2$-coloring, and this would imply that $\chi^{\textrm{FAT}}>1$.
    \item Case 2: $\alpha \neq 0$. In this case, $\chi^{\textrm{FAT}}>1$. Consider the reduced form of $\alpha$, that is, write $\alpha=r/s$, with $r,s\in \mathbb{N}$ and $\gcd(r,s)=1$. Note that this is always possible by definition of FAT coloring. Then, given $v\in V$ and a coloring class $V_i$ such that $v\notin V_i$, 
    $$
     e(v,V_i)= \alpha \deg v = \frac{r}{s}\cdot  \deg v.
    $$ Hence, $s \cdot e(v,V_i)=r \cdot \deg v$. Since $\gcd(r,s)=1$, this implies that $s|\deg v$, for all $v\in V$. Thus,
    $$
    s|\gcd \{\deg v : v \in V\}=1,
    $$ implying that $s=1$. Since $\alpha=r/s\in[0,1]$, it follows that $\alpha=1$. As a consequence, $\beta=0$. Therefore, $\chi^{\textrm{FAT}}=2$ and the coloring is proper, implying that $G$ is bipartite.
\end{itemize}

\end{proof}

\begin{figure}[h]
    \centering
    \includegraphics[width=6.7cm]{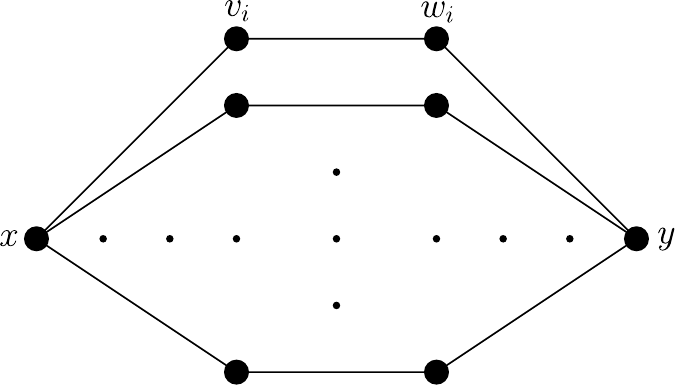}
    \caption{The book graph}
    \label{fig:book}
\end{figure} 

\begin{example}[Book graphs]\label{ex:bookgraphs}

Given $m\geq 1$, the $m$--\emph{book graph} \cite{petalsbooks} is the graph on $N=2m+2$ nodes such that (Figure \ref{fig:book}):
\begin{itemize}
    \item $V=\{x,y,v_1,\ldots,v_m,w_1,\ldots,w_m\}$;
    \item $E=\{(x,v_i)\}_{i=1}^m\cup \{(y,w_i)\}_{i=1}^m\cup\{(v_i,w_i)\}_{i=1}^m$.
\end{itemize}

    When $m$ is odd, $$\gcd \{\deg v : v \in V\}=\gcd \{2,m\}=1$$ and, since the book graph is bipartite, Theorem \ref{thm:gcd} gives $\chi^{\textrm{FAT}}=2$ (Figure \ref{fig:bookcolored}).
    
    When $m$ is even, $$\gcd \{\deg v : v \in V\}=\gcd \{2,m\}=2,$$ therefore Theorem \ref{thm:gcd} does not apply. By Proposition~\ref{prop:delta+1}, we have $\chi^{\textrm{FAT}}\leq \delta+1=3$. Moreover, a FAT $3$-coloring exists with parameters $\alpha=1/2$ and $\beta=0$, obtained as follows.
\begin{align*}
    &\text{Color 1: } \{x,y\},\\
    &\text{Color 2: } \{v_1,\ldots,v_{m/2}\} \cup \{w_{m/2+1},\ldots,w_m\},\\
    &\text{Color 3: } \{w_1,\ldots,w_{m/2}\} \cup \{v_{m/2+1},\ldots,v_m\}.
\end{align*}
It is easy to check that every vertex has exactly half of its neighbors in each of the other two colors, and none in its own coloring class. Thus, $\chi^{\textrm{FAT}}=3$ (Figure \ref{fig:bookcolored}).
\end{example}

\begin{figure}[h]
    \centering
    \includegraphics[scale=0.66]{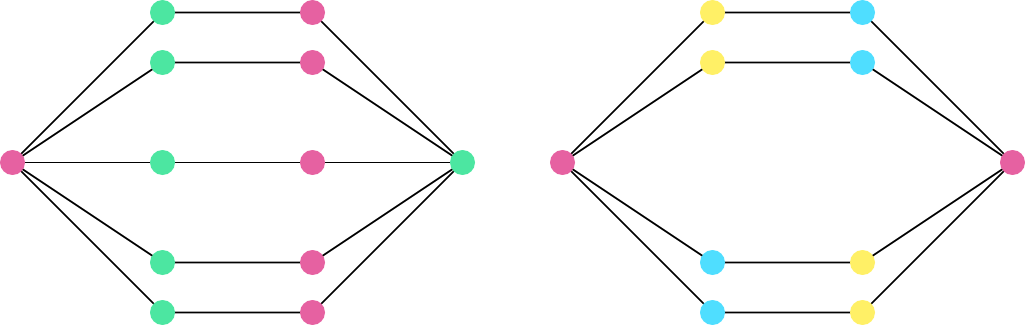}
    \caption{Optimal FAT colorings of book graphs for $m$ odd (left) and $m$ even (right).}
    \label{fig:bookcolored}
\end{figure}

\begin{remark}\label{rmk:chi}
We have seen examples with $\chi=\chi^{\mathrm{FAT}}$ (for instance complete graphs, trees, petal graphs, and $m$-book graphs when $m$ is odd), and examples with $\chi<\chi^{\mathrm{FAT}}$ (for instance $m$-book graphs when $m$ is even). 
There are also graphs with $\chi>\chi^{\mathrm{FAT}}$. For example, consider an odd cycle $C_N$ on $N$ nodes such that $N$ is not divisible by $3$. In this case, $\chi=3$, and by Proposition \ref{prop:delta+1}, $\chi^{\mathrm{FAT}}\leq \delta+1=3$. By Theorem \ref{thm:regular}, a FAT $k$-coloring with $k>1$ would require all coloring classes to have the same size, and this is impossible since $N$ is not divisible by $2$ or $3$. Hence, $\chi^{\mathrm{FAT}}(C_N)=1$ if $N$ is not divisible by $2$ or $3$, implying that $\chi>\chi^{\mathrm{FAT}}$ in this case.
\end{remark}

\begin{example}[Cycle graphs]\label{ex:cyclegraphs}
   By Example \ref{ex:KN}, the odd cycle $C_3$ on $3$ nodes satisfies
   $$\chi^{\mathrm{FAT}}(C_3)=3.$$
   Moreover, by Remark \ref{rmk:chi}, any odd cycle $C_N$ on $N$ nodes such that $N$ is not divisible by $3$ satisfies $$\chi^{\mathrm{FAT}}(C_N)=1.$$ 
   
   It is natural to ask what happens for the other cycles.

First, we consider any cycle $C_N$ on $N$ nodes such that $N$ is divisible by $3$. By Proposition \ref{prop:delta+1}, $\chi^{\mathrm{FAT}}\leq \delta+1=3$, and in this case, we can construct a FAT $3$-coloring by repeating the color sequence $(1,2,3)$ around the cycle. Each vertex has one neighbor in each of the two other coloring classes, therefore the fairness condition is satisfied. Thus, in this case,
     $$  
       \chi^{\mathrm{FAT}}(C_N)=3.
       $$   

   It is left to consider the case of even cycles $C_{2\ell}$ such that $3\nmid 2\ell$. Note that, by Example \ref{ex:bipartite>=2}, $\chi^{\mathrm{FAT}}(C_{2\ell})\geq 2$ since $C_{2\ell}$ is bipartite. Moreover, by Proposition \ref{prop:delta+1}, $\chi^{\mathrm{FAT}}(C_{2\ell})\leq 3$, since $\delta=2$ in this case. Hence,
   $$
   \chi^{\mathrm{FAT}}(C_{2\ell})\in \{2,3\}.
   $$

Now, by Theorem~\ref{thm:regular}, the FAT chromatic number of a regular graph must divide the number of its vertices. Since we are assuming that $3\nmid 2\ell$, a FAT $3$-coloring is impossible, hence necessarily
       $$
       \chi^{\mathrm{FAT}}(C_{2\ell})=2.
       $$
      In summary (Figure \ref{fig:cycles}), 
   $$
      \chi^{\mathrm{FAT}}(C_N)=\begin{cases}
          1, &\text{ if $N$ is odd and not divisible by $3$},\\
        2, &\text{ if $N$ is even and not divisible by $3$,} \\
        3, &\text{ if $N$ is a multiple of $3$.} 
      \end{cases}
      $$
   \end{example}

    We now prove a generalization of both Proposition \ref{prop:delta+1} and Theorem \ref{thm:gcd} by adapting the proof of the latter.
    \begin{theorem}\label{thm:gcdgen}
        Let $G$ be a connected graph.
        Then,
        \[
        \chi^{\textrm{FAT}} \leq \gcd\bigl\{\deg v \colon v\in V\bigr\} + 1.
        \]
        Moreover, if equality holds, then any $\chi^{\textrm{FAT}}$-coloring is a proper equitable coloring.
    \end{theorem}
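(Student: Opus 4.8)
The plan is to adapt the integrality argument from the proof of Theorem \ref{thm:gcd}, but now to extract a bound on $k$ rather than to force $\alpha=1$. Write $g:=\gcd\{\deg v:v\in V\}$ and fix a FAT $k$-coloring realizing $k=\chi^{\textrm{FAT}}$, with parameter $\alpha$. As in the earlier proof I would split into the cases $\alpha=0$ and $\alpha\neq 0$. When $\alpha=0$, connectedness immediately forces $k=1$, and since $g\geq 1$ (every vertex of a connected graph on at least two vertices has positive degree), the bound $k\leq g+1$ holds trivially.

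The substantive case is $\alpha\neq 0$, where $k\geq 2$. First I would write $\alpha=r/s$ in lowest terms. Since $k\geq 2$, every vertex $v$ lies outside at least one class $V_i$, and for such a class $e(v,V_i)=(r/s)\deg v$ is a nonnegative integer; because $\gcd(r,s)=1$ this yields $s\mid\deg v$ for every $v\in V$, hence $s\mid g$ and in particular $s\leq g$. The new ingredient, compared to Theorem \ref{thm:gcd}, is to exploit the nonnegativity of $\beta=1-(k-1)\alpha$: from $\beta\geq 0$ we get $(k-1)\alpha\leq 1$, i.e. $(k-1)r\leq s$. Combining with $r\geq 1$ and $s\leq g$ gives the chain
$$k-1\leq (k-1)r\leq s\leq g,$$
which is exactly $\chi^{\textrm{FAT}}\leq g+1$.

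For the equality statement, suppose $\chi^{\textrm{FAT}}=g+1$. Then $g\geq 1$ forces $k\geq 2$, so we are necessarily in the case $\alpha\neq 0$ and the displayed chain applies to \emph{any} such coloring. Since its two endpoints $k-1$ and $g$ now coincide, every inequality in the chain must be an equality. From $(k-1)r=k-1$ with $k-1>0$ I read off $r=1$, and then $s=k-1=g$, so $\alpha=1/(k-1)$ and $\beta=1-(k-1)\cdot\frac{1}{k-1}=0$. By Remark \ref{rmk:proper}, a FAT coloring with $\beta=0$ is precisely a proper (equitable) coloring, which is the desired conclusion.

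There is no real obstacle here beyond care with the degenerate cases: the argument is a clean squeeze, and the main conceptual step — the one that upgrades Theorem \ref{thm:gcd} — is recognizing that the constraint $\beta\geq 0$ bounds $k-1$ by $s$, which the integrality argument in turn bounds by $g$. I would double-check only that $e(v,V_i)$ is genuinely an integer (it counts neighbors, so it is) and that the single-vertex graph, where $g=0$, is excluded or treated as a vacuous case.
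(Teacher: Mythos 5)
Your proof is correct and follows essentially the same route as the paper: write $\alpha=r/s$ in lowest terms, deduce $s\mid\deg v$ for all $v$ and hence $s\mid\gcd\{\deg v\}$, then bound $k-1\le s$ (your $(k-1)r\le s$ from $\beta\ge0$ is exactly the paper's observation that a vertex can have neighbors in at most $s$ other classes), and in the equality case force $r=1$, $\beta=0$, i.e.\ a proper equitable coloring. Your explicit handling of $\alpha=0$ and the degenerate cases is slightly more careful than the paper's, but the argument is the same.
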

    \begin{proof}
        Fix a FAT $\chi^{\textrm{FAT}}$-coloring of $G$ with parameter $\alpha$. As in the proof of Theorem \ref{thm:gcd}, write $\alpha = r/s$ with $r,s\in \mathbb N$ and $\gcd(r,s) = 1$. Then, for any vertex $v\in V$ and coloring class $V_i$ such that $v\notin V_i$, we have
        \[
        e(v,V_i) = \alpha \deg v = \frac rs \cdot \deg v.
        \]
        Hence, $$s\cdot e(v,V_i) = r\cdot \deg v.$$ Since $\gcd(r,s)=1$, it follows that $s \mid \deg v$ for all $v\in V$. Consequently,
        \[
        s \mid \gcd\bigl\{\deg v \colon v\in V\bigr\}.
        \]
        Because $\alpha= r/s$, any vertex $v$ can have neighbors in at most $s$ distinct coloring classes. Hence, $$\chi^{\textrm{FAT}}\leq s+1.$$
        
        If equality holds, then $r=1$, $s=\gcd\bigl\{\deg v\colon v\in V\bigr\}$, and the coloring must be proper. Hence, every $\chi^{\textrm{FAT}}$-coloring is a proper equitable coloring (cf.\ Remark \ref{rmk:proper}).
    \end{proof}

\begin{figure}[h]
   \centering
\includegraphics[scale=0.66]{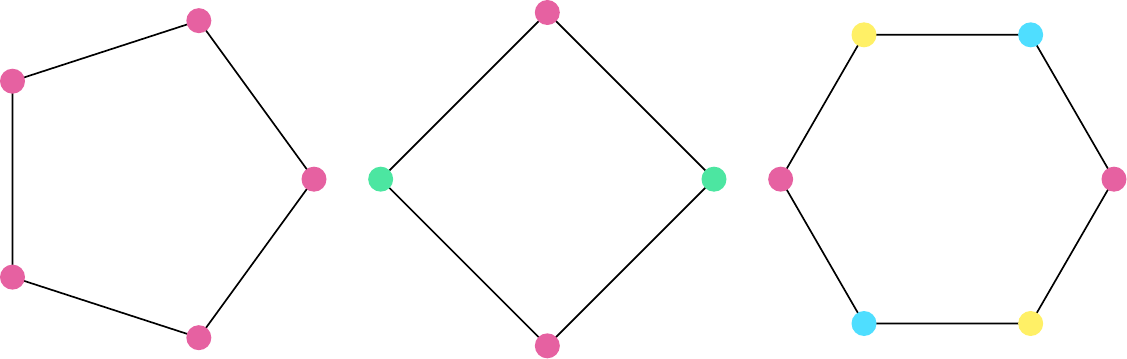}
   \caption{Optimal FAT colorings illustrating $\chi^{\mathrm{FAT}}(C_5)=1$, $\chi^{\mathrm{FAT}}(C_4)=2$, and $\chi^{\mathrm{FAT}}(C_6)=3$.}
   \label{fig:cycles}
\end{figure}

Now, the \emph{volume} of a set of vertices $S\subseteq V$ is defined as $$\vol S:=\sum_{v\in S}\deg v.$$

We conclude this section by proving that, in any FAT coloring of any graph, every coloring class has the same volume.

\begin{proposition}\label{prop:vol}
 Let $G=(V,E)$ be a graph, and let $k>1$. For any FAT $k$-coloring of $G$ with parameter $\alpha>0$, all coloring classes have the same volume. In particular, since $\vol V=2|E|$, each class has volume $2|E|/k$, and therefore $k$ divides $2|E|$.
\end{proposition}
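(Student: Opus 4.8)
The plan is to mimic the double-counting argument used in the proof of Theorem~\ref{thm:regular}, but replacing cardinalities by volumes. Fix a FAT $k$-coloring with classes $V_1,\ldots,V_k$ and parameter $\alpha>0$, with $\beta=1-(k-1)\alpha$ as usual. Since $k>1$, there are at least two distinct classes, so I can compare any pair $V_i,V_j$ with $i\neq j$.

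The key observation is that for $i\neq j$ every vertex $v\in V_i$ lies outside $V_j$, so the fairness condition gives $e(v,V_j)=\alpha\deg v$. Summing over all $v\in V_i$ counts exactly the edges between $V_i$ and $V_j$, hence
$$
e(V_i,V_j)=\sum_{v\in V_i}e(v,V_j)=\alpha\sum_{v\in V_i}\deg v=\alpha\cdot\vol V_i.
$$
By the symmetric computation, summing $e(w,V_i)=\alpha\deg w$ over $w\in V_j$ yields $e(V_i,V_j)=\alpha\cdot\vol V_j$. Equating the two expressions and cancelling $\alpha$ — which is legitimate precisely because $\alpha>0$ — gives $\vol V_i=\vol V_j$.

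Since this holds for every pair $i\neq j$, all $k$ classes share a common volume. Summing over the classes, $\sum_{i=1}^k \vol V_i=\vol V=2|E|$, so each class has volume $2|E|/k$; in particular, $k$ divides $2|E|$.

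There is essentially no obstacle here: the statement follows directly from the uniformity of $\alpha$ across all off-diagonal class pairs. The only points requiring care are the two hypotheses built into the statement — $k>1$ guarantees at least one pair of distinct classes to compare, and $\alpha>0$ justifies the cancellation. Note that, unlike in Theorem~\ref{thm:regular}, no regularity assumption is needed, since volumes automatically absorb the varying vertex degrees; this is exactly why the class \emph{sizes} can differ in general (as in the star graph example), while the class \emph{volumes} cannot.
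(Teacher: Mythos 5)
Your proof is correct and follows exactly the same double-counting argument as the paper: computing $e(V_i,V_j)$ from both sides to obtain $\alpha\vol V_i=\alpha\vol V_j$ and cancelling $\alpha>0$. The closing remarks about why volumes work where sizes do not are accurate but not needed for the argument.
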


\begin{proof}
For all $i\neq j$,
$$
e(V_i,V_j)=\sum_{v\in V_i} e(v,V_j)=\sum_{v\in V_i}\alpha\deg(v)=\alpha\vol(V_i).
$$
Symmetrically,
$$
e(V_i,V_j)=\sum_{u\in V_j} e(u,V_i)=\sum_{u\in V_j}\alpha\deg(u)=\alpha\vol(V_j).
$$
Hence, $\alpha\vol(V_i)=\alpha\vol (V_j)$. Since $\alpha>0$, we conclude that $\vol (V_i)=\vol(V_j)$ for all $i, j$.
\end{proof}

\begin{remark}
For general graphs, Proposition \ref{prop:vol} ensures equality of volumes of the coloring classes, but not of their sizes. In contrast, Theorem \ref{thm:regular} shows that in regular graphs every coloring class has the same size, therefore in that case the divisibility condition applies to $N$ as well as to $2|E|$.
\end{remark}

\section{Spectral aspects}\label{section:spectral}

Spectral graph theory provides a powerful framework for studying structural features of graphs through the eigenvalues and eigenvectors of associated operators, and many deep connections with graph coloring are known \cite{ElphickWocjan2015,Nikiforov2007,Gabriel-colouring,SunDas2020,beersmulas2024,Hoffman1970,generalizedHoffman}. In this section, we show that the existence of a FAT $k$-coloring imposes strong spectral constraints on the normalized Laplacian of a graph, and we derive bounds on the FAT chromatic number in terms of eigenvalue multiplicities. For regular graphs, we also translate these results to the adjacency matrix and the Kirchhoff Laplacian. \newline

We fix again a simple graph $G=(V,E)$ on $N$ vertices.

\begin{definition}
The \emph{adjacency matrix} of $G$ is the $N\times N$ matrix $A:=A(G)$ with entries
$$
A_{ij} :=
\begin{cases}
1 & \text{if } v_i\sim v_j,\\
0 & \text{otherwise.}
\end{cases}
$$
The \emph{Kirchhoff Laplacian} of $G$ is
$$
K:=K(G):=D-A,
$$
where $D:=D(G)=\mathrm{diag}(\deg v_1,\ldots,\deg v_N)$ is the \emph{degree matrix} of $G$, and the \emph{normalized Laplacian} of $G$ is
$$
L:=L(G):=I-D^{-1}A.
$$
\end{definition}

The normalized Laplacian $L$ is known to have $N$ real non-negative eigenvalues, which are contained in $[0,2]$ and we denote by
$$
0=\lambda_1\leq \lambda_2 \leq \cdots \leq \lambda_N \leq 2.
$$

Such eigenvalues are known to encode several structural properties of the graph: for instance, the multiplicity of $\lambda_1=0$ equals the number of connected components of $G$, $\lambda_2$ approximates the Cheeger constant in the connected case, and $\lambda_N$ detects extremal structures, being equal to $2$ if and only if a connected component of $G$ is bipartite, and to $N/(N-1)$ if and only if $G$ is complete. \newline 

It is often convenient to view $L$ as a linear operator on the space $C(V)$ of functions $f:V\to\mathbb{R}$, given by
$$
Lf(v) = f(v) - \frac{1}{\deg v}\sum_{w\sim v} f(w), \quad v\in V,
$$
which is equivalent to the matrix formulation above. In particular, $f$ is an eigenfunction with eigenvalue $\lambda$ if and only if $Lf=\lambda f$, that is,
\begin{equation}\label{eq:eigenpair}
    \lambda f(v) = f(v) - \frac{1}{\deg v}\sum_{w\sim v} f(w), \quad v\in V.
\end{equation}

We now introduce auxiliary functions associated with pairs of coloring classes. These are central in the proof of Theorem \ref{thm:lambda} below.

\begin{definition}\label{def:f_ij}
   Consider a vertex $k$-coloring of $G$ with coloring classes $V_1,\ldots,V_k$. For two distinct indices $i,j\in\{1,\ldots,k\}$, define the function $f_{ij}\colon V\to\R$ by
    \[
    f_{ij}(v) \coloneqq \begin{cases}
        1, &\text{ if } v\in V_i,\\
        -1, &\text{ if } v\in V_j,\\
        0, &\text{ otherwise.}
    \end{cases}
    \]
\end{definition}

\begin{theorem}\label{thm:lambda}
    If $G$ admits a FAT $k$-coloring with parameter $\alpha$, then $\lambda=k\alpha$ is an eigenvalue of $L(G)$ with multiplicity at least $\max\{1,k-1\}$.
\end{theorem}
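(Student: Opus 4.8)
The plan is to show directly that each function $f_{ij}$ from Definition~\ref{def:f_ij} is an eigenfunction of $L$ with eigenvalue $k\alpha$, and then to argue that among these functions one can find $\max\{1,k-1\}$ linearly independent ones. First I would verify the eigenvalue equation \eqref{eq:eigenpair} pointwise. Fix distinct $i,j$ and a vertex $v$. There are three cases according to whether $v\in V_i$, $v\in V_j$, or $v$ lies in neither class. Consider $v\in V_i$, so $f_{ij}(v)=1$. Then
\[
\frac{1}{\deg v}\sum_{w\sim v} f_{ij}(w)
=\frac{1}{\deg v}\bigl(e(v,V_i)-e(v,V_j)\bigr)
=\frac{1}{\deg v}\bigl(\beta\deg v-\alpha\deg v\bigr)
=\beta-\alpha,
\]
using the FAT conditions $e(v,V_i)=\beta\deg v$ (since $v\in V_i$) and $e(v,V_j)=\alpha\deg v$ (since $v\notin V_j$). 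Hence $Lf_{ij}(v)=f_{ij}(v)-(\beta-\alpha)=1-\beta+\alpha$. Recalling $\beta=1-(k-1)\alpha$, this equals $1-(1-(k-1)\alpha)+\alpha=k\alpha$, which is exactly $k\alpha\cdot f_{ij}(v)$. The case $v\in V_j$ is symmetric (both $f_{ij}(v)$ and the neighbor sum flip sign), and the case $v\notin V_i\cup V_j$ gives $f_{ij}(v)=0$ together with $e(v,V_i)=e(v,V_j)=\alpha\deg v$, so the neighbor sum vanishes and $Lf_{ij}(v)=0=k\alpha\cdot 0$. Thus $Lf_{ij}=k\alpha f_{ij}$ in all cases, so $k\alpha$ is an eigenvalue (in particular the multiplicity is at least $1$, covering the degenerate possibility that some classes are empty).

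It remains to pin down the multiplicity. When all $k$ classes are nonempty, I would consider the $k-1$ functions $f_{1,2},f_{1,3},\ldots,f_{1,k}$, i.e.\ fixing the first index and varying the second. To see these are linearly independent, pick for each $m\in\{2,\ldots,k\}$ a vertex $u_m\in V_m$; then $f_{1,m}(u_m)=-1$ while $f_{1,m'}(u_m)=0$ for $m'\neq m$ (since $u_m\notin V_{m'}$ and $u_m\notin V_1$). This triangular pattern of evaluations forces a vanishing linear combination to have all coefficients zero, yielding $k-1$ independent eigenfunctions and hence multiplicity at least $k-1$. Combining with the $k=1$ remark gives the stated bound $\max\{1,k-1\}$.

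The main subtlety to handle carefully is the possibility that some coloring class is empty, which is what the $\max\{1,k-1\}$ (rather than plain $k-1$) accommodates: if a class $V_m$ is empty then $f_{1,m}\equiv 0$ is not a valid eigenfunction and the evaluation argument breaks for that index. I expect this bookkeeping to be the only real obstacle; the eigenvalue computation itself is a routine three-case check driven entirely by the FAT conditions and the identity $\beta=1-(k-1)\alpha$. One clean way to dispatch the edge case is to note that if the coloring genuinely uses $k$ colors in a meaningful sense all classes are nonempty, and otherwise one falls back to the trivial lower bound of $1$ guaranteed by the existence of at least one nonzero $f_{ij}$ whenever $k\geq 2$.
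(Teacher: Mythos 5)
Your proposal is correct and follows essentially the same route as the paper: the identical three-case verification that each $f_{ij}$ satisfies $Lf_{ij}=(1-\beta+\alpha)f_{ij}=k\alpha\, f_{ij}$, followed by extracting $k-1$ linearly independent such functions. The only difference is that you spell out the linear independence of $f_{1,2},\ldots,f_{1,k}$ via the triangular evaluation argument (and flag the empty-class edge case), where the paper simply asserts it.
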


\begin{proof}
Fix a FAT $k$-coloring with parameter $\alpha$, and let $V_1,\ldots,V_k$ be its coloring classes. \newline

By definition of $\beta$, we have 
$$
1-\beta+\alpha = 1-1+ (k-1)\alpha + \alpha = k\alpha = \lambda.
$$

If we can show that the functions $f_{ij}$ from Definition \ref{def:f_ij} are eigenfunctions of $L$ with eigenvalue $\lambda=1-\beta+\alpha$, then the result follows, since there are $k-1$ linearly independent such functions. In particular, we need to check that \eqref{eq:eigenpair} holds for all $v\in V$, with $f=f_{ij}$ and $\lambda=1-\beta+\alpha$.\newline

We fix the indices $i$ and $j$, and for $v\in V$ we consider three cases.

\begin{itemize}

\item Case 1: $v\in V_i$. In this case, $f_{ij}(v)=1$. Moreover, among the neighbors of $v$, a fraction $\beta$ lies in $V_i$, and a fraction $\alpha$ lies in $V_j$. Hence,
$$
f_{ij}(v)-\frac{1}{\deg v}\sum_{w\sim v}f_{ij}(w)=1-\frac{1}{\deg v}\left(\beta \deg v- \alpha \deg v \right)=1-\beta + \alpha = \lambda = \lambda \cdot f_{ij}(v).
$$
    
    \item Case 2: $v\in V_j$. In this case, $f_{ij}(v)=-1$. Moreover, among the neighbors of $v$, a fraction $\alpha$ lies in $V_i$, and a fraction $\beta$ lies in $V_j$. Hence,
$$
f_{ij}(v)-\frac{1}{\deg v}\sum_{w\sim v}f_{ij}(w)=-1-\frac{1}{\deg v}\left(\alpha \deg v- \beta \deg v \right)=-1+\beta - \alpha = -\lambda = \lambda \cdot f_{ij}(v).
$$

    \item Case 3: $v\notin V_i\cup V_j$. In this case, $f_{ij}(v)=0$. Moreover, among the neighbors of $v$, a fraction $\alpha$ lies in $V_i$, and a fraction $\alpha$ lies in $V_j$. Hence,
$$
f_{ij}(v)-\frac{1}{\deg v}\sum_{w\sim v}f_{ij}(w)=-\frac{1}{\deg v}\left(\alpha \deg v- \alpha \deg v \right)=0=\lambda \cdot f_{ij}(v).
$$

\end{itemize}

\end{proof}
The next result is the analogue of Proposition \ref{prop:delta+1}, with the maximum eigenvalue multiplicity of $L(G)$ in place of the minimum vertex degree of $G$.

\begin{corollary}\label{cor:lambda}
    Let $\mu$ be the maximum eigenvalue multiplicity of $L(G)$. If $G$ admits a FAT $k$-coloring, then $k\leq \mu+1$. In particular, $$\chi^{\textrm{FAT}}\leq \mu+1.$$ Moreover, if $G=K_N$, then the bound is sharp.
\end{corollary}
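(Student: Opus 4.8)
The plan is to obtain the bound as an immediate consequence of Theorem \ref{thm:lambda}, and then to verify sharpness by computing the normalized Laplacian spectrum of $K_N$ explicitly.

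For the inequality, suppose $G$ admits a FAT $k$-coloring with parameter $\alpha$. By Theorem \ref{thm:lambda}, $\lambda = k\alpha$ is an eigenvalue of $L(G)$ whose multiplicity is at least $\max\{1,k-1\}\geq k-1$. Since $\mu$ is by definition the \emph{maximum} eigenvalue multiplicity of $L(G)$, this forces $\mu\geq k-1$, that is, $k\leq \mu+1$. As this holds for every $k$ for which a FAT $k$-coloring exists, it holds in particular for the largest such $k$, giving $\chi^{\textrm{FAT}}\leq \mu+1$.

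For sharpness at $G=K_N$, I would compute the spectrum of $L(K_N)$ directly. Writing $J$ for the $N\times N$ all-ones matrix, we have $A=J-I$ and $D=(N-1)I$, so that
$$
L = I - D^{-1}A = I - \frac{1}{N-1}(J-I) = \frac{N}{N-1}I - \frac{1}{N-1}J.
$$
The eigenvalues of $J$ are $N$ (simple, with the all-ones eigenvector) and $0$ (with multiplicity $N-1$), hence $L$ has the eigenvalue $0$ once and the eigenvalue $\tfrac{N}{N-1}$ with multiplicity $N-1$. Therefore $\mu = N-1$, and the bound reads $\chi^{\textrm{FAT}}(K_N)\leq \mu+1 = N$. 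Since Example \ref{ex:KN} gives $\chi^{\textrm{FAT}}(K_N)=N$, equality holds and the bound is sharp.

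There is no substantial obstacle here: the inequality is immediate from Theorem \ref{thm:lambda}, and the only computation is the elementary spectral decomposition of $L(K_N)$. The single point worth stating carefully is that $\mu$ is the maximum multiplicity, so that one highly degenerate eigenvalue (here $\tfrac{N}{N-1}$) already certifies $\mu=N-1$; this is precisely the structural reason the bound is tight for the complete graph, and it is consistent with the remark earlier in the paper that $\lambda_N=N/(N-1)$ if and only if $G$ is complete.
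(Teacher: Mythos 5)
Your proof is correct and follows essentially the same route as the paper: the inequality is read off directly from Theorem \ref{thm:lambda}, and sharpness for $K_N$ comes from $\mu=N-1$ together with $\chi^{\textrm{FAT}}(K_N)=N$ from Example \ref{ex:KN}. The only difference is that you verify $\mu(K_N)=N-1$ by an explicit (and correct) spectral computation, whereas the paper simply asserts this well-known fact.
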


\begin{proof}
    By Theorem \ref{thm:lambda}, if $G$ admits a FAT $k$-coloring, then $\mu \geq k-1$, hence $k\leq \mu+1$. For the complete graph $K_N$, we have that $\chi^{\textrm{FAT}}=N$ while $\mu=N-1$, therefore equality holds.
\end{proof}

\begin{remark} 
Proper $k$-colorings are FAT colorings with $\beta=0$. In this case, $\alpha=1/(k-1)$, and therefore Theorem \ref{thm:lambda} tells us that, for proper FAT colorings, $\lambda=k/k-1$ with multiplicity at least $k-1$. This is also an immediate consequence of Proposition 3.5 in \cite{beersmulas2024}, and it generalizes two classical results:\\ 
(i) the fact that, for the complete graph $K_N$, $N/(N-1)$ is an eigenvalue of $L$ with multiplicity $N-1$ (by coloring each vertex with a different color), and\\
(ii) the fact that, for bipartite graphs, $\lambda_N=2$ (by taking any proper $2$-coloring).
\end{remark}

For regular graphs, we can translate Theorem \ref{thm:lambda} in terms of the adjacency matrix and the Kirchhoff Laplacian, as follows.

\begin{theorem}\label{thm:spectrumAK}
    Let $G$ be a $d$-regular graph that admits a FAT $k$-coloring with parameter $\alpha$. Then, $d k\alpha$ is an eigenvalue of $K(G)$ with multiplicity at least $\max\{1,k-1\}$. Similarly, $d(1-k\alpha)$ is an eigenvalue of $A(G)$ with multiplicity at least $\max\{1,k-1\}$.
\end{theorem}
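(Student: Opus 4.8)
The plan is to leverage the relationship between the three graph operators in the $d$-regular case and transport the spectral information obtained in Theorem \ref{thm:lambda} from the normalized Laplacian $L(G)$ to both $K(G)$ and $A(G)$. For a $d$-regular graph, the degree matrix is $D = dI$, so the definitions simplify dramatically: we have $K = dI - A$ and $L = I - \tfrac{1}{d}A = \tfrac{1}{d}(dI - A) = \tfrac{1}{d}K$. Thus $K = dL$ and $A = dI - K = d(I - L)$. The key observation is that all three matrices are simultaneously diagonalizable, sharing the same eigenvectors, with eigenvalues related by these linear transformations.

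First, I would invoke Theorem \ref{thm:lambda}, which tells us that since $G$ admits a FAT $k$-coloring with parameter $\alpha$, the value $\lambda = k\alpha$ is an eigenvalue of $L(G)$ with multiplicity at least $\max\{1, k-1\}$. Concretely, the functions $f_{ij}$ from Definition \ref{def:f_ij} furnish $k-1$ linearly independent eigenfunctions of $L$ for this eigenvalue. Next, using $K = dL$, any eigenfunction $f$ of $L$ with $Lf = \lambda f$ satisfies $Kf = dLf = d\lambda f = dk\alpha \cdot f$, so $dk\alpha$ is an eigenvalue of $K(G)$ with the same eigenfunctions, hence with multiplicity at least $\max\{1, k-1\}$. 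Similarly, using $A = d(I - L)$, we get $Af = d(f - Lf) = d(1 - \lambda)f = d(1 - k\alpha) \cdot f$, so $d(1 - k\alpha)$ is an eigenvalue of $A(G)$ with the same multiplicity bound.

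The step-by-step order is therefore: (i) record $D = dI$ and derive the operator identities $K = dL$ and $A = d(I-L)$; (ii) apply Theorem \ref{thm:lambda} to obtain the eigenvalue $k\alpha$ of $L$ together with its eigenspace of dimension at least $\max\{1,k-1\}$; (iii) observe that the shared eigenvectors carry over, scaling the eigenvalue by $d$ for $K$ and mapping it via $x \mapsto d(1-x)$ for $A$, while preserving multiplicity exactly because the linear maps $L \mapsto dL$ and $L \mapsto d(I-L)$ are bijective on eigenspaces.

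I do not expect any genuine obstacle here: the result is essentially an immediate corollary of Theorem \ref{thm:lambda} obtained by conjugation-free linear algebra, since regularity collapses $D$ to a scalar multiple of the identity. The only point requiring a sentence of care is to justify that multiplicities are \emph{preserved} (not merely bounded below) under the passage between operators, which follows because an invertible affine transformation of a single matrix maps its $\lambda$-eigenspace isomorphically onto the corresponding eigenspace of the transformed matrix. Consequently the lower bound $\max\{1,k-1\}$ transfers verbatim to both $K(G)$ and $A(G)$.
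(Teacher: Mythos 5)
Your proposal is correct and follows essentially the same route as the paper: both use the identities $K = dL$ and $A = d(I-L)$ for $d$-regular graphs to transfer the eigenvalue $k\alpha$ of $L$ from Theorem \ref{thm:lambda} to $dk\alpha$ for $K$ and $d(1-k\alpha)$ for $A$, with multiplicities preserved. Your extra remark on why multiplicities are preserved exactly is a nice touch but matches the paper's (implicit) reasoning.
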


\begin{proof}
For a $d$-regular graph, $K=d\cdot L$ and $A=d\cdot (\id-L)$. Therefore, in this case,
\begin{align*}
    \lambda \text{ is an eigenvalue for }L
    &\iff d \lambda \text{ is an eigenvalue for }K\\
    &\iff d(1-\lambda) \text{ is an eigenvalue for }A,
\end{align*} with the same multiplicities. In particular,
\begin{align*}
    k\alpha \text{ is an eigenvalue for }L
    &\iff d k\alpha \text{ is an eigenvalue for }K\\
    &\iff d(1-k\alpha) \text{ is an eigenvalue for }A,
\end{align*}with the same multiplicities. Together with \ref{thm:lambda}, this proves the claim.
\end{proof}

\section{Regular Turán graphs}\label{section:Turan}

In this section, we characterize the FAT chromatic number of regular Turán graphs. Before stating the main result, we recall the definitions of complete multipartite graphs, Turán graphs and their regular case, and we fix the notations that will be used throughout this section.

 \begin{figure}[h]
    \centering
\includegraphics[scale=0.66]{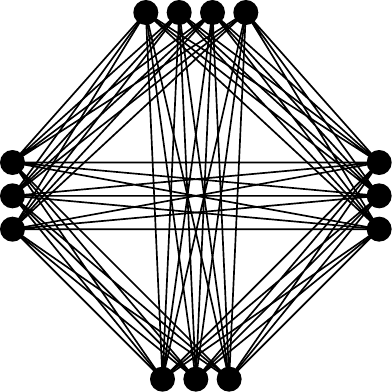}
    \caption{The Turán graph $T(13,4)$.}
    \label{fig:Turan}
\end{figure}

\begin{definition}\label{def:parts}
    A \emph{complete multipartite graph} is a graph whose vertex set is partitioned into disjoint independent sets $P_1,\dots,P_t$ (called \emph{parts}), with an edge between every two vertices in different parts. If the parts have sizes $n_1,\dots,n_t$, we denote the graph by $K_{n_1,\dots,n_t}$.\end{definition}

    \begin{definition}
            The \emph{Turán graph} $T(N,t)$ (Figure \ref{fig:Turan}) is the complete $t$-partite graph on $N$ vertices whose parts are as equal in size as possible, that is, each part has either $\lfloor N/t \rfloor$ or $\lceil N/t \rceil$ vertices. In this case, the parts $P_1,\dots,P_t$ are also called \emph{Turán parts}.
    \end{definition}

Turán graphs play a central role in extremal combinatorics \cite{turan1941,erdHos1981combinatorial,katona1964problem}, and here they provide a natural class of graphs for testing the behavior of the FAT chromatic number. 

\begin{definition}\label{def:Turangraph}
 A \emph{regular Turán graph}  is a Turán graph $T(N,t)$ in which $t$ divides $N$, so that all parts have the same size $N/t$. In this case, the graph is $d$-regular, with degree 
      $$d = N - N/t.$$
\end{definition}

We are now ready to state the main result of this section.

\begin{theorem}\label{thm:Turan}
 Let $T(N,t)$ be the regular Turán graph with $N$ vertices and $t$ equal parts $P_1,\ldots,P_t$ of size $N/t$. Every FAT coloring of $T(N,t)$ falls into one of the following two cases (Figure \ref{fig:T(12,4)bm}):
\begin{enumerate}
    \item \emph{Balanced case.} Each Turán part contains all colors, in equal sizes.
    \item \emph{Monochromatic case.} Each Turán part contains exactly one color, and each color appears on exactly the same number of Turán parts. 
\end{enumerate}
    As a consequence, $$\chi^{\textrm{FAT}}\bigl(T(N,t)\bigr)=\max\bigl\{t,N/t\bigr\}.$$
\end{theorem}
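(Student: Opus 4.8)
The plan is to fix a FAT $k$-coloring with classes $V_1,\dots,V_k$ and parameters $\alpha,\beta$, to write $n:=N/t$ for the common part size and $d:=N-n$ for the common degree, and to exploit the fact that the neighborhood of a vertex $v$ in a part $P_a$ is exactly $V\setminus P_a$. Since the parts are independent sets, for any color $j$ one gets the clean identity $e(v,V_j)=|V_j|-|V_j\cap P_a|$, which depends only on $a$ and $j$. Writing $m_{a,j}:=|V_j\cap P_a|$, the FAT conditions then read $|V_j|-m_{a,j}=\alpha d$ whenever $P_a$ contains a vertex of some color $\neq j$, and $|V_j|-m_{a,j}=\beta d$ whenever $m_{a,j}\geq 1$. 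I would first dispose of $k=1$ (trivial) and assume $k\geq 2$, so that $G$ is connected, $\alpha>0$, and by Theorem~\ref{thm:regular} every class has size $N/k$ and $k\mid N$.

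The structural heart is a dichotomy governed by whether any Turán part is polychromatic. If some part $P_a$ contains two colors $i\neq i'$, then the same-color condition at a vertex of color $i'$ gives $|V_{i'}|-m_{a,i'}=\beta d$, while the fairness condition at a vertex of color $i$ (for which $i'$ is an ``other'' color) gives $|V_{i'}|-m_{a,i'}=\alpha d$; hence $\alpha=\beta$. Combined with $\beta=1-(k-1)\alpha$ this forces $\alpha=\beta=1/k$. I would then observe that in this regime every expression $|V_j|-m_{a,j}$ equals $d/k$: for a color $j$ absent from $P_a$, fairness at any vertex of $P_a$ would give $|V_j|=d/k$, i.e.\ $N/k=d/k$, which is impossible since $d<N$, so every color is present and the uniform value $d/k$ applies. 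As $|V_j|=N/k$ this yields $m_{a,j}=(N-d)/k=n/k$ for all $a,j$, which is exactly the balanced case and, by integrality, requires $k\mid n$.

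If instead every part is monochromatic, let $\sigma$ assign to each part its unique color and let $t_j$ denote the number of parts of color $j$. For $v\in P_a$ the same-color count is $(t_{\sigma(a)}-1)\,n=\beta d$ and the count in any other color $j$ is $t_j\,n=\alpha d$. Since $\alpha>0$, no color can be missing, so every color appears and the fairness equations give $t_j=\alpha d/n$ for all $j$; with $\sum_j t_j=t$ this forces $t_j=t/k$, i.e.\ the monochromatic case, requiring $k\mid t$. I would check the consistency of $(t_{\sigma(a)}-1)n=\beta d$ and $t_j n=\alpha d$ against $\alpha=N/(kd)$, $\beta=1-(k-1)\alpha$, which reduces to the clean identity $(k\alpha-1)d=N-d=n$.

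Finally, to extract the formula I would combine the two cases: any FAT $k$-coloring is balanced (hence $k\mid n$, so $k\leq n$) or monochromatic (hence $k\mid t$, so $k\leq t$), giving the upper bound $k\leq\max\{t,N/t\}$. For the matching lower bound I would exhibit the extremal colorings, whichever of $t,n$ is larger: the monochromatic coloring with $k=t$ (each part its own color, which is proper with $\alpha=1/(t-1)$, $\beta=0$) and the balanced coloring with $k=n$ (one vertex of each color per part, with $\alpha=\beta=1/n$). I expect the main obstacle to be the bookkeeping in the dichotomy step, namely verifying that the pointwise FAT equations, which a priori constrain $m_{a,j}$ only for colors actually present in $P_a$, propagate to all pairs $(a,j)$ and exclude the mixed intermediate possibilities, rather than any single hard inequality.
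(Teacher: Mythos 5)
Your proposal is correct, and it rests on the same basic mechanism as the paper's proof --- the identity $e(v,V_j)=|V_j|-|V_j\cap P_a|$ for $v\in P_a$, combined with Theorem~\ref{thm:regular} to pin down $|V_j|=N/k$ --- but it organizes the case analysis quite differently. The paper proceeds through three local structural lemmas (Propositions~\ref{claim1:turan}, \ref{claim2:turan}, and~\ref{claim3:turan}): colors present in a part have equal multiplicity there, two colors in a part force all colors, and monochromaticity propagates from one part to all. Each is proved by equating two expressions of the form $|V_\cdot|-|V_\cdot\cap P_\cdot|=\alpha d$ at different vertices. Your pivot is instead a single global observation: a polychromatic part forces $\alpha=\beta$, hence $\alpha=\beta=1/k$, after which the FAT condition at any vertex of any part reads $|V_j|-m_{b,j}=d/k$ uniformly in $j$ (whether or not $j$ is that vertex's own color), so $m_{b,j}=n/k$ for every part and every color in one stroke. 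This collapses the paper's three lemmas into one computation and makes the dichotomy (some part polychromatic versus all parts monochromatic) exactly exhaustive; it also hands you the feasible parameters $\alpha=t/N$ and $\alpha=1/(t-1)$ for free, which the paper only extracts later in Example~\ref{ex:Turanalpha}. The price is that your argument leans on $\alpha=\beta$ being a global constant of the coloring, whereas the paper's lemmas are purely combinatorial statements about intersection sizes. One small presentational caveat: after deriving $\alpha=\beta=1/k$ you should state explicitly that the uniform identity $m_{b,j}=n/k$ holds for \emph{every} part $P_b$, not only the polychromatic part $P_a$ that launched the argument; the reasoning does give this (any vertex of any part witnesses it), but your write-up keeps the index $a$ throughout and only switches to ``for all $a,j$'' at the very end. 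Your explicit exhibition of the two extremal colorings for the lower bound is also slightly more complete than the paper's one-line justification.
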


\begin{figure}[h]
    \centering
    \includegraphics[scale=0.66]{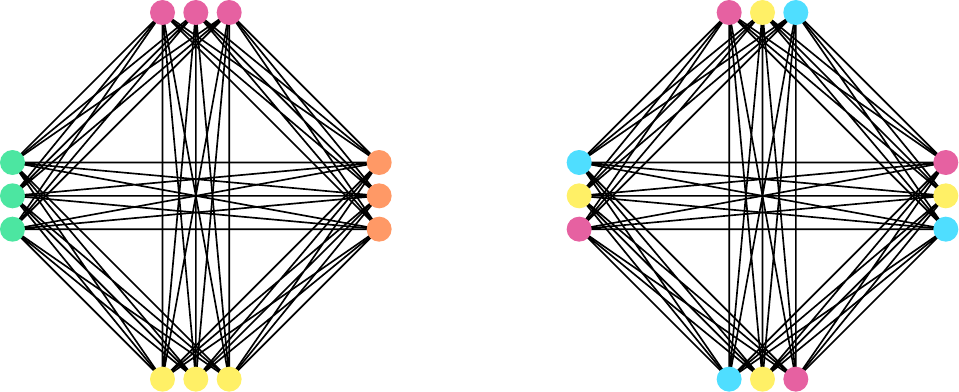}
    \caption{The Turán graph $T(12,4)$ with a monochromatic (left) and balanced (right) FAT coloring.}
    \label{fig:T(12,4)bm}
\end{figure}

We split the proof of Theorem \ref{thm:Turan} into four parts (three propositions and one corollary). The first claim tells us that, given a FAT coloring and a Turán part $P_g$ of $T(N,t)$, all colors that appear in $P_g$ occur with the same multiplicity. 

\begin{proposition}\label{claim1:turan}
    Let $T(N,t)$ be the regular Turán graph with $N$ vertices and $t$ equal parts $P_1,\ldots,P_t$ of size $N/t$. Fix a FAT $k$-coloring with coloring classes $V_1,\ldots,V_k$, and let $g\in\{1,\ldots,t\}$. If two distinct coloring classes $V_i$ and $V_j$ are such that $V_i\cap P_g \neq \emptyset$ and $V_j\cap P_g \neq \emptyset$, then 
    $$
    |V_i \cap P_g|=|V_j \cap P_g|.
    $$ 
\end{proposition}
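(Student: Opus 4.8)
The plan is to exploit the rigid adjacency structure of complete multipartite graphs. The key observation is that in $T(N,t)$ a vertex $u\in P_g$ is adjacent to exactly the vertices outside its own part, i.e.\ to $V\setminus P_g$. Consequently, for \emph{every} subset $S\subseteq V$ and every $u\in P_g$,
\[
e(u,S)=|S\setminus P_g|,
\]
a quantity that does not depend on the particular choice of $u$ inside $P_g$. Writing $d=N-N/t$ for the common degree, this is the only graph-theoretic fact I would need.

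Next I would feed this into the fairness half of the FAT condition. For $\ell\in\{i,j\}$, I want to evaluate the condition at a vertex of $P_g$ that does \emph{not} lie in $V_\ell$, so that the fraction $\alpha$ (rather than $\beta$) appears. Such a vertex exists precisely because of the hypothesis: to handle $\ell=i$ I pick any $u\in V_j\cap P_g$ (nonempty, and $u\notin V_i$ since $j\neq i$), and to handle $\ell=j$ I pick any $u\in V_i\cap P_g$. Applying $e(u,V_\ell)=\alpha\deg u=\alpha d$ together with the identity above gives
\[
|V_i\setminus P_g|=\alpha d=|V_j\setminus P_g|.
\]

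Finally, I would promote this equality of ``outside'' sizes to an equality of ``inside'' sizes. Since the regular Turán graph $T(N,t)$ (with $t\geq 2$) is connected and $d$-regular, and since the presence of two distinct classes forces $k>1$, Theorem \ref{thm:regular} applies and yields $|V_i|=|V_j|=N/k$. Subtracting the common value $|V_i\setminus P_g|=|V_j\setminus P_g|$ from these equal totals gives
\[
|V_i\cap P_g|=|V_i|-|V_i\setminus P_g|=|V_j|-|V_j\setminus P_g|=|V_j\cap P_g|,
\]
as desired.

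I do not expect a serious obstacle here; the proof is short once the adjacency identity is isolated. The one point requiring care is the choice of evaluation vertex: applying the FAT condition at a vertex of $V_\ell$ itself would produce $\beta d$ instead of $\alpha d$, and comparing the two expressions would only reveal the byproduct $\alpha=\beta$, which is not what is needed. Routing through a vertex of the complementary class---whose existence is exactly guaranteed by the hypothesis that both $V_i$ and $V_j$ meet $P_g$---is what makes the argument go through cleanly, and invoking Theorem \ref{thm:regular} to equalize the total class sizes is the final ingredient.
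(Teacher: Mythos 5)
Your proposal is correct and follows essentially the same route as the paper's proof: evaluate the fairness condition at a vertex of $V_j\cap P_g$ against class $V_i$ and vice versa, use the fact that such a vertex sees exactly $|V_\ell\setminus P_g|=|V_\ell|-|V_\ell\cap P_g|$ neighbors in $V_\ell$, and then invoke Theorem \ref{thm:regular} to equalize $|V_i|$ and $|V_j|$. The only difference is notational ($|V_\ell\setminus P_g|$ versus $|V_\ell|-|V_\ell\cap P_g|$), so there is nothing further to compare.
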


\begin{proof}

Let $d:=N-N/t$ denote the degree of all vertices in $T(N,t)$. Fix $v_i\in V_i\cap P_g$, and $v_j\in V_j\cap P_g$. Then, 

$$
    e(v_j,V_i)=|V_i|-|V_i \cap P_g|=\alpha d,
    $$ 
   using the fact that $v_j\in P_g$ and $v_j\notin V_i$, and similarly 
    $$
    e(v_i,V_j)=|V_j|-|V_j \cap P_g|=\alpha d,
    $$ 
    using the fact that $v_i\in P_g$ and $v_i\notin V_j$.
    Therefore,
    $$
    \alpha d=|V_i|-|V_i \cap P_g|=|V_j|-|V_j \cap P_g|,
    $$ implying that 
    $$
 |V_i \cap P_g|-|V_j \cap P_g|  = |V_i|- |V_j|  =0,
    $$ where the last equality follows by Theorem \ref{thm:regular}. Hence, $|V_i \cap P_g|=|V_j \cap P_g|$.
\end{proof}

The next claim tells us that, given a FAT coloring and a Turán part $P_g$ of $T(N,t)$, if $P_g$ contains at least two colors, then it contains all colors.

\begin{proposition}\label{claim2:turan}
    Let $T(N,t)$ be the regular Turán graph with $N$ vertices and $t$ equal parts $P_1,\ldots,P_t$ of size $N/t$. Fix a FAT $k$-coloring with coloring classes $V_1,\ldots,V_k$, and let $g\in\{1,\ldots,t\}$. If two distinct coloring classes $V_i$ and $V_j$ are such that $V_i\cap P_g \neq \emptyset$ and $V_j\cap P_g \neq \emptyset$, then for all $z\neq i,j$, 
    $$
    V_z\cap P_g \neq \emptyset.
    $$ 
\end{proposition}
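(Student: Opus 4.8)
The plan is to exploit the special adjacency structure of $T(N,t)$ together with the fairness half of the FAT condition. The key observation is that every vertex $v\in P_g$ is adjacent to precisely the vertices outside $P_g$ and to none inside $P_g$, since $P_g$ is an independent part and the graph is complete multipartite. Hence, writing $d=N-N/t$ for the common degree, for every class $V_z$ the number of neighbors of $v$ in $V_z$ is
$$
e(v,V_z)=|V_z|-|V_z\cap P_g|=\frac{N}{k}-|V_z\cap P_g|,
$$
where the last equality uses Theorem \ref{thm:regular} to replace $|V_z|$ by $N/k$. This reduces every relevant edge count to an intersection size.

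First I would fix a witness $v_i\in V_i\cap P_g$ and set $m:=|V_i\cap P_g|=|V_j\cap P_g|$, the two intersection sizes being equal by Proposition \ref{claim1:turan}; note $m\geq 1$. Applying the identity above to the class $V_j$, and using that $v_i\notin V_j$ because $i\neq j$, the fairness condition gives
$$
\alpha d=e(v_i,V_j)=\frac{N}{k}-m.
$$
Then I would argue by contradiction: suppose some color $z$ is absent from $P_g$, i.e.\ $V_z\cap P_g=\emptyset$. Then $z\neq i$ (since $V_i\cap P_g\neq\emptyset$), so $v_i\notin V_z$, and applying the identity to $V_z$ together with fairness yields $\alpha d=e(v_i,V_z)=N/k$. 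Comparing the two values of $\alpha d$ forces $N/k-m=N/k$, hence $m=0$, contradicting $m\geq 1$. Thus no color is absent from $P_g$, and in particular $V_z\cap P_g\neq\emptyset$ for every $z\neq i,j$.

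I expect the argument to be short, so the only real care lies in the bookkeeping of the two branches of the FAT condition. One must check that the chosen vertex $v_i$ lies in neither $V_j$ nor the hypothetical absent class $V_z$, so that in both evaluations it is the fairness value $\alpha d$ --- and not the tolerance value $\beta d$ --- that is prescribed. The hypothesis that $P_g$ already meets two distinct classes is precisely what supplies such a witness $v_i$ and, through Proposition \ref{claim1:turan}, a well-defined common multiplicity $m\geq 1$; this is the leverage that converts the existence of an absent class into the impossible equality $m=0$.
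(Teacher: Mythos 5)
Your proposal is correct and follows essentially the same route as the paper: fix a witness $v_i\in V_i\cap P_g$, use the complete multipartite structure to write $e(v_i,V_z)=|V_z|-|V_z\cap P_g|$, invoke Theorem \ref{thm:regular} to equate class sizes, and compare the fairness value $\alpha d$ across classes to force $|V_z\cap P_g|=|V_j\cap P_g|>0$. The only cosmetic differences are that you phrase it as a contradiction and substitute $|V_z|=N/k$ explicitly (and your appeal to Proposition \ref{claim1:turan} is not actually needed, since $|V_j\cap P_g|\geq 1$ already follows from the hypothesis).
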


\begin{proof}

Let $d:=N-N/t$ denote the degree of all vertices in $T(N,t)$, and fix $v_i\in V_i\cap P_g$. Then, 

$$
    e(v_i,V_j)=|V_j|-|V_j \cap P_g|=\alpha d,
    $$ using the fact that $v_i\in P_g$ and $v_i\notin V_j$, and similarly
    $$
    e(v_i,V_z)=|V_z|-|V_z \cap P_g|=\alpha d,
    $$ using the fact that $v_i\in P_g$ and $v_i\notin V_z$.   Therefore,
    $$
    \alpha d=|V_j|-|V_j \cap P_g|=|V_z|-|V_z \cap P_g|,
    $$ implying that 
    $$
 |V_j \cap P_g|-|V_z \cap P_g|  = |V_j|- |V_z|  =0,
    $$ where the last equality follows by Theorem \ref{thm:regular}. Hence, $|V_j \cap P_g|=|V_z \cap P_g|$. Since $V_j\cap P_g \neq \emptyset$, this implies that $V_z\cap P_g \neq \emptyset$.
\end{proof}

The third claim tells us that, given a FAT coloring, a Turán part is monochromatic if and only if all other parts are monochromatic.

\begin{proposition}\label{claim3:turan}
Let $T(N,t)$ be the regular Turán graph with $N$ vertices and $t$ equal parts $P_1,\ldots,P_t$ of size $N/t$. Given a FAT coloring and two distinct Turán parts $P_g$ and $P_s$,
$$
P_g \text{ is monochromatic} \iff  P_s \text{ is monochromatic.}
$$
\end{proposition}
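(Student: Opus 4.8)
The plan is to prove the contrapositive-flavored equivalence by a direct argument: it suffices to show that if $P_g$ is \emph{not} monochromatic, then $P_s$ is \emph{not} monochromatic either, and the reverse implication follows by symmetry (swapping the roles of $g$ and $s$). So assume $P_g$ contains at least two colors. By Proposition \ref{claim2:turan}, $P_g$ then contains \emph{all} $k$ colors, and by Proposition \ref{claim1:turan} each color appears in $P_g$ with the same multiplicity, namely $|P_g|/k = N/(tk)$ times. My goal is to extract a vertex in $P_s$ whose own color is witnessed among the vertices of $P_g$ in a way that forces $P_s$ to split.

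First I would pick any vertex $u\in P_s$ and let $V_m$ be its color class. Since $P_g$ contains all $k$ colors, it contains a vertex $w\in V_m\cap P_g$ of the same color as $u$. The key computation is to evaluate the fairness condition for $u$ against some color $V_p$ with $p\neq m$. Because $u\in P_s$, its non-neighbors are exactly the other vertices of $P_s$, so for any color $p$ we have $e(u,V_p) = |V_p| - |V_p\cap P_s|$, and the FAT condition gives $e(u,V_p)=\alpha d$ for $p\neq m$. The point is that $|V_p|=N/k$ is the same for every color (by Theorem \ref{thm:regular}), so the equation $|V_p|-|V_p\cap P_s|=\alpha d$ forces $|V_p\cap P_s|$ to take the same value for every color $p\neq m$ that is present in $P_s$ — and more usefully, it pins down exactly how many vertices of each color sit in $P_s$.

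The crux is to turn this into a genuine obstruction to $P_s$ being monochromatic. I would argue as follows: since $P_g$ is non-monochromatic and hence balanced, the common class size is $|V_p|=N/k$ while $|V_p\cap P_g|=N/(tk)$, so the ``outside'' count is $\alpha d = N/k - N/(tk)$. Plugging the same $\alpha d$ into $u$'s equation yields $|V_p\cap P_s| = N/(tk)$ for each $p\neq m$ (the colors different from $u$'s). This value is strictly positive (as $N/(tk)\geq 1$), so $P_s$ contains vertices of colors other than $m$; in particular $P_s$ contains at least two colors and is therefore not monochromatic. Running the identical argument with $g$ and $s$ interchanged gives the converse, completing the equivalence.

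The main obstacle I anticipate is justifying that $\alpha d$ computed from a vertex in the non-monochromatic part $P_g$ is the \emph{same} $\alpha d$ appearing in the fairness equation for $u\in P_s$ — this is exactly where the hypothesis that $\alpha$ is a single global constant for the whole FAT coloring does the work, so I must be careful to invoke that $\alpha$ is fixed across all vertices rather than re-deriving it locally. A secondary subtlety is the degenerate possibility $\alpha=0$ (equivalently $k=1$), but for $k=1$ every part is trivially monochromatic and the equivalence is vacuous, so I would dispose of that case at the outset and assume $k>1$, whence $\alpha>0$ and all the class-size equalities from Theorem \ref{thm:regular} are available.
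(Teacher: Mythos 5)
Your proof is correct and uses essentially the same mechanism as the paper: equating the globally constant quantity $\alpha d$ computed via $e(v,V_p)=|V_p|-|V_p\cap P|$ at a vertex of $P_g$ and at a vertex of $P_s$. The only difference is organizational — the paper argues by contradiction from ``$P_g$ monochromatic, $P_s$ not'' and derives $|V_j\cap P_s|=0$, while you run the contrapositive from the non-monochromatic part and conclude $|V_p\cap P_s|=N/(tk)>0$; both are sound.
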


\begin{proof}Let $V_1,\ldots,V_k$ be the coloring classes of the given FAT coloring, and let $d:=N-N/t$ denote the degree of all vertices in $T(N,t)$.\newline 

Assume, by contradiction, that $P_g\subseteq V_i$ (that is, $P_g$ contains only color $i$), while $P_s$ contains at least two distinct colors. Then, by Proposition \ref{claim2:turan}, $P_s$ must contain all colors. In particular, $V_i \cap P_s \neq \emptyset$, and therefore we can fix $w_i \in V_i \cap P_s$. Moreover, given $j\neq i$, we must have that $V_j \cap P_g = \emptyset$, while $V_j \cap P_s \neq \emptyset$. Fix also $v_i\in P_g$. Then, $v_i\in V_i$.\newline 

We have that

$$ e(v_i,V_j)=|V_j|=\alpha d, $$ 

using the fact that $v_i\in P_g$, $V_j \cap P_g = \emptyset$ and $v_i\notin V_j$, and similarly

$$ e(w_i,V_j)=|V_j|-|V_j\cap P_s|=\alpha d, $$     using the fact that $w_i\in P_s$ and $w_i\notin V_j$. This implies that 

$$
\alpha d=|V_j|=|V_j|-|V_j\cap P_s|,
$$ therefore $|V_j\cap P_s|=0$. This is a contradiction, since $V_j \cap P_s \neq \emptyset$.

\end{proof}

We are now ready to prove Theorem \ref{thm:Turan}.

\begin{corollary*}[Theorem \ref{thm:Turan}]
   Let $T(N,t)$ be the regular Turán graph with $N$ vertices and $t$ equal parts $P_1,\ldots,P_t$ of size $N/t$. Every FAT coloring of $T(N,t)$ falls into one of the following two cases (Figure \ref{fig:T(12,4)bm}):
\begin{enumerate}
    \item \emph{Balanced case.} Each Turán part contains all colors, in equal sizes.
    \item \emph{Monochromatic case.} Each Turán part contains exactly one color, and each color appears on exactly the same number of Turán parts. 
\end{enumerate}
    As a consequence, $$\chi^{\textrm{FAT}}(T(N,t))=\max\{t,N/t\}.$$
\end{corollary*}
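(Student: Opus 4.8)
The plan is to assemble the three propositions into the stated dichotomy and then read off $\chi^{\textrm{FAT}}$ from the resulting divisibility constraints together with two explicit extremal constructions. First I would fix an arbitrary FAT $k$-coloring with classes $V_1,\ldots,V_k$ and split on whether the parts are monochromatic. By Proposition \ref{claim3:turan}, either every Turán part is monochromatic or none is. If no part is monochromatic, then each $P_g$ contains at least two colors, so by Proposition \ref{claim2:turan} it contains all $k$ colors, and by Proposition \ref{claim1:turan} these colors occur with equal multiplicity inside $P_g$; since $|P_g|=N/t$, each color appears $N/(tk)$ times in every part. This is the balanced case, and integrality forces $k\mid N/t$. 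If instead every part is monochromatic, each class $V_i$ is a union of whole parts; invoking Theorem \ref{thm:regular} (the graph is connected and $d$-regular with $d=N-N/t$), all classes share the common size $N/k$, so color $i$ occupies exactly $(N/k)/(N/t)=t/k$ parts. This count is independent of $i$, giving the monochromatic case and forcing $k\mid t$.

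For the upper bound, I would simply note that the balanced case gives $k\le N/t$ and the monochromatic case gives $k\le t$, so every FAT $k$-coloring of $T(N,t)$ satisfies $k\le\max\{t,N/t\}$, whence $\chi^{\textrm{FAT}}(T(N,t))\le\max\{t,N/t\}$.

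For the matching lower bound I would exhibit two FAT colorings realizing the two extremes. Taking $k=t$, color each part with its own distinct color: since parts are independent and each vertex is adjacent to all $N/t$ vertices of every other part, one has $e(v,V_h)=N/t$ for each color $h$ other than $v$'s own and $e(v,V_{c})=0$ in $v$'s own class, which is FAT with $\alpha=1/(t-1)$ and $\beta=0$. Taking $k=N/t$, assign within every part each of the $N/t$ colors exactly once (any bijection per part); then a vertex $v$ of color $c$ sees exactly one neighbor of each color in each of the $t-1$ other parts and none in its own part, so $e(v,V_{c'})=t-1$ for every color $c'$, which is FAT with $\alpha=\beta=t/N=1/k$ (and indeed $\beta=1-(k-1)\alpha=1/k$). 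These witness $k=t$ and $k=N/t$ respectively, so $\chi^{\textrm{FAT}}(T(N,t))\ge\max\{t,N/t\}$; combining the two bounds yields the claimed equality.

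The step requiring the most care is the verification of the two constructions, especially the balanced one: the key observation is that a vertex of color $c$ receives the \emph{same} number $t-1$ of neighbors in \emph{every} color class, including its own, so that a single parameter with $\alpha=\beta=1/k$ is consistent with the relation $\beta=1-(k-1)\alpha$. The other delicate point is ensuring that the divisibility conditions $k\mid N/t$ and $k\mid t$, extracted via Theorem \ref{thm:regular}, are exactly what cap each case, so that no FAT $k$-coloring with $k>\max\{t,N/t\}$ can exist; everything else is bookkeeping with the parameters $\alpha$ and $\beta$.
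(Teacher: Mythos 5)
Your proof is correct and follows the same route as the paper: the dichotomy is assembled from Propositions \ref{claim3:turan}, \ref{claim2:turan}, and \ref{claim1:turan} together with Theorem \ref{thm:regular}, and the value of $\chi^{\mathrm{FAT}}$ is then read off from the two cases. You are in fact somewhat more complete than the paper's own proof, which only records the upper bounds ($k\leq N/t$ in the balanced case and $k\leq t$ in the monochromatic case) and leaves the existence of the extremal proper $t$-coloring and balanced $N/t$-coloring implicit; your explicit verification of both constructions, including the check that $\alpha=\beta=t/N$ is consistent with $\beta=1-(k-1)\alpha$ in the balanced case, closes that gap.
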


\begin{proof} The first claim follows immediately from Propositions \ref{claim1:turan}, \ref{claim2:turan}, and \ref{claim3:turan}, together with Theorem \ref{thm:regular}. To show that $$\chi^{\textrm{FAT}}(T(N,t))=\max\{t,N/t\},$$ observe that:

\begin{enumerate}
    \item In the \emph{balanced case}, the maximum possible number of colors is $|P_g|=N/t$.
    \item In the \emph{monochromatic case}, the maximum possible number of colors is $t$.
\end{enumerate}
\end{proof}

\section{Irreducible FAT colorings}\label{section:irr}
We now shine a light on the question of determining for which values of $\alpha$ a graph $G$ admits a FAT coloring. A necessary condition can be derived from Theorem \ref{thm:lambda}: if $k\alpha$ is not an eigenvalue of the normalized Laplacian $L(G)$ with multiplicity at least $k-1$ for any integer $k$, then $\alpha$ cannot be the parameter of a FAT coloring of $G$.

Finding all feasible values of $\alpha$ directly appears difficult, since it apparently requires describing all FAT colorings of $G$. In this section, we study relationships between different FAT colorings and show that some can be obtained from others by merging coloring classes. This allows us to reduce the determination problem to finding a smaller set of non-decomposable FAT colorings, called \emph{irreducible} FAT colorings, from which all others can be derived.

We start with an illustrative example.
In Theorem \ref{thm:Turan}, we determined the FAT coloring number of regular Turán graphs by characterizing all of their FAT colorings. Two types of FAT Turán colorings arose: balanced and monochromatic colorings. Moreover, we saw that each coloring class of a monochromatic FAT coloring of a regular Turán graph $T(N,t)$  is a union of $\ell$ Turán parts, for some fixed $\ell\mid t$. In other words, for any $\ell\mid t$, one obtains a monochromatic FAT $\ell$-coloring by merging the $t$ Turán parts $P_1,\ldots,P_t$, which are also the coloring classes of the unique proper $t$-coloring of $T(N,t)$, into $\ell$ groups of size $t/\ell$. Similarly, in this case, balanced FAT colorings can be constructed as follows. Consider a FAT $N/t$-coloring whose coloring classes $V_1,\ldots,V_{N/t}$ each contain exactly one vertex from every Turán part. By Theorem \ref{thm:Turan}, for any divisor $m$ of $N/t$, one obtains a balanced FAT $m$-coloring by taking unions of $(N/t)/m$ coloring classes of a balanced $N/t$-coloring.

These observations suggest an underlying principle: FAT colorings with fewer coloring classes can often be obtained by taking unions of coloring classes of FAT colorings with more coloring classes (Figure \ref{fig:merging}).
Recall that we already made a similar observation in Remark \ref{rmk:merging}. We now formalize it in the following theorem.

\begin{figure}[h]
    \centering
    \includegraphics[scale=0.6]{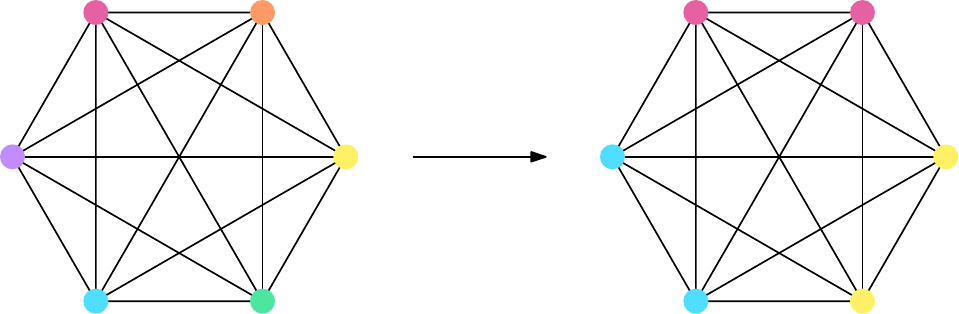}
    \caption{Obtaining a new FAT coloring by merging coloring classes.}
    \label{fig:merging}
\end{figure}

\begin{theorem}\label{thm:merging}
    Let $c_1$ be a FAT $k$-coloring of $G$ with coloring classes
    $V_1^{(1)},\ldots,V_k^{(1)}$ 
    and parameters $\alpha^{(1)}$ and $\beta^{(1)}$.
    For any $\ell\mid k$, one can construct a FAT $\ell$-coloring $c_2$ with parameters
    \begin{equation*}
        \alpha^{(2)} = \frac k\ell\cdot \alpha^{(1)} \quad\text{and} \quad \beta^{(2)} = \beta^{(1)} + \biggl(\frac k\ell-1\biggr)\alpha^{(1)}
    \end{equation*}
    by taking unions of $k/\ell$ coloring classes of $c_1$ to form $\ell$ coloring classes of $c_2$.
\end{theorem}
\begin{proof}
    Let $\ell\mid k$, and define the partition $V_1^{(2)},\ldots,V_\ell^{(2)}$ by letting $V_i^{(2)}$ be the union of the $i$-th block of $k/\ell$ coloring classes of $c_1$, i.e.,
    \[
    V_i^{(2)} \coloneqq \bigcup_{j=1}^{k/\ell} V_{(i-1)\cdot k/\ell+j}^{(1)}.
    \]
    We claim that the coloring $c_2$ with coloring classes $V_i^{(2)}$ is a FAT $\ell$-coloring with the parameters $\alpha^{(2)}$ and $\beta^{(2)}$ given in the statement.
  To see this, let $v\in V(G)$, and fix $m$ such that $v\in V^{(2)}_m$. Moreover, consider any $n\in \bigl\{1,\ldots,\ell\bigr\}$ with $n\neq m$. Since $v$ has $\alpha^{(1)}\cdot\deg v$ neighbors in each of the coloring classes $V^{(1)}_{(n-1)\cdot k/\ell + j}$ of $c_1$, and since $V^{(2)}_n$ is a union of $k/\ell$ of those classes, $v$ has $$\frac k\ell\cdot \alpha^{(1)} \cdot \deg v $$ neighbors in $V_n^{(2)}$.
    Hence, $c_2$ is a FAT $\ell$-coloring with parameter $\alpha^{(2)}$.
    
    Finally, we have that
    \begin{align*}
        \beta^{(2)} &= 1-(\ell-1)\cdot \alpha^{(2)} \\
        &= 1-(\ell-1)\cdot \frac k\ell\cdot \alpha^{(1)}\\
        &= 1-(k-1)\cdot \alpha^{(1)} + \biggl(\frac k\ell-1\biggr)\cdot \alpha^{(1)}  \\
        &=\beta^{(1)} + \biggl(\frac k\ell-1\biggr)\alpha^{(1)}.
    \end{align*}
    This proves the claim.
\end{proof}
We now formalize the idea of obtaining new FAT colorings from existing ones by taking unions of coloring classes.
This construction naturally induces a partial order on the set of all FAT colorings of $G$.
\begin{definition}\label{def:po}
    Let $c_1$ and $c_2$ be FAT colorings of $G$. 
    We say that $c_1$ is \emph{finer} than $c_2$, and that $c_2$ is \emph{coarser} than $c_1$, denoted $c_2\preceq c_1$, if every coloring class of $c_2$ is a union of some coloring classes of $c_1$.
\end{definition}

\begin{remark}
    A straightforward verification shows that the relation introduced in Definition \ref{def:po} defines a partial order on the set of all FAT colorings of $G$.    
\end{remark}

\begin{example}
    Any FAT coloring of $G$ is finer than the unique FAT $1$-coloring. Consequently, the FAT $1$-coloring is irreducible if and only if $\chi^{\mathrm{FAT}}=1$.
\end{example}
Now, every FAT coloring is either a maximal element of this partial order or is coarser than some maximal element of the partial order.
Therefore, to describe all FAT colorings of $G$, it suffices to find its maximal elements.
This motivates the following terminology.
\begin{definition}
    A FAT coloring that is maximal with respect to the partial order from Definition \ref{def:po} is called an \emph{irreducible} FAT coloring.
    Any FAT coloring that is not irreducible is called \emph{reducible}.
\end{definition}

The next result shows that proper FAT colorings always correspond to maximal elements of the partial order.
\begin{proposition}
    Any proper FAT coloring of $G$ is irreducible.
\end{proposition}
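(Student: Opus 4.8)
The plan is to argue by contradiction, extracting a relation between the parameters of $c$ and those of a hypothetical strictly finer FAT coloring, and then using the extremal value $\beta=0$ to rule such a coloring out. Recall (Remark \ref{rmk:proper}) that $c$ being proper means $\beta=0$, i.e.\ no vertex has a neighbor in its own coloring class. Suppose $c$ is reducible, hence not maximal for $\preceq$: then there is a FAT coloring $c'$, with parameters $\alpha'$ and $\beta'$ and with $k'$ classes, such that $c\preceq c'$ and $c\neq c'$. Since $c$ is a coarsening of $c'$ but $c\neq c'$, at least one coloring class $V_i$ of $c$ is the union of $s\geq 2$ coloring classes of $c'$ (if every class of $c$ coincided with a single class of $c'$, the two colorings would be equal).

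The key step is a local count at a vertex of the split class. I would take $v\in V_i$ with $\deg v>0$. Its own $c'$-class $W_0\subseteq V_i$ contributes $\beta'\deg v$ of its neighbors, while each of the other $s-1$ classes of $c'$ contained in $V_i$ contributes $\alpha'\deg v$ (the FAT condition for $c'$ applies with parameter $\alpha'$ to these, since $v$ does not belong to them). Summing gives $e(v,V_i)=\bigl(\beta'+(s-1)\alpha'\bigr)\deg v$, while the FAT condition for $c$ gives $e(v,V_i)=\beta\deg v=0$. Dividing by $\deg v$ yields
\[
\beta'+(s-1)\alpha'=0 .
\]
As $\alpha',\beta'\geq 0$ and $s-1\geq 1$, this forces $\alpha'=0$ and $\beta'=0$. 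But $\beta'=1-(k'-1)\alpha'=1$ when $\alpha'=0$, contradicting $\beta'=0$. This is the desired contradiction, and it is essentially the coarsening identity of Theorem \ref{thm:merging} read backwards: refining can only decrease $\beta$, and $\beta=0$ is already minimal, so no proper refinement can exist.

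The main obstacle is the purely degenerate possibility that every vertex of the split class $V_i$ has degree $0$, so that no positive-degree witness $v\in V_i$ is available for the division above. The plan to close this gap is as follows: assuming $G$ has at least one edge, pick any vertex $u$ with $\deg u>0$; such a $u$ must lie outside $V_i$, and for any $c'$-class $W'\subseteq V_i$ we have $e(u,W')=0$ because $W'$ consists of isolated vertices. The FAT condition for $c'$ then gives $\alpha'\deg u=0$, so $\alpha'=0$ and hence $\beta'=1$; this makes every vertex have all its neighbors inside its own $c'$-class, which is contained in its own $c$-class, so $u$ would have a neighbor in its own $c$-class, again contradicting $\beta=0$. (For the edgeless graph the notion of proper FAT coloring is degenerate and the statement is read under the standing assumption that $G$ has an edge.) I expect the substantive content to be entirely the local count, with this degenerate case being the only point requiring care.
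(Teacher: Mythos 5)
Your proof is correct and follows essentially the same route as the paper: both argue that a strict refinement $c'$ of a proper coloring would force $\beta = \beta' + (\text{positive multiple of }\alpha')$, which cannot vanish since $\alpha'=0$ would give $\beta'=1$. The only difference is that you carry out the local neighbor count at a vertex of a split class directly (which cleanly handles coarsenings into blocks of unequal sizes, and the isolated-vertex degeneracy), whereas the paper simply invokes the parameter identity from Theorem~\ref{thm:merging}.
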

\begin{proof}
   We prove the claim by showing that a reducible FAT coloring cannot be proper.
   
    Let $c_2$ be a reducible FAT coloring. Then, there exists a FAT coloring $c_1$ that is strictly finer than $c_2$.
    By Theorem \ref{thm:merging}, we have
    \[
    \beta^{(2)} = \beta^{(1)} + \biggl(\frac k\ell - 1 \biggr)\alpha^{(1)},
    \]
    where $\alpha^{(1)}>0$. Hence $\beta^{(2)}>0$, and  therefore $c_2$ cannot be proper.
    We conclude that any proper FAT coloring must be irreducible.
\end{proof}

    We now return to the motivating question of this section: for which values of $\alpha$ does the regular Turán graph $T(N,t)$ admit a FAT coloring with parameter $\alpha$?

\begin{figure}[h!]
    \centering
    \includegraphics[scale=0.66]{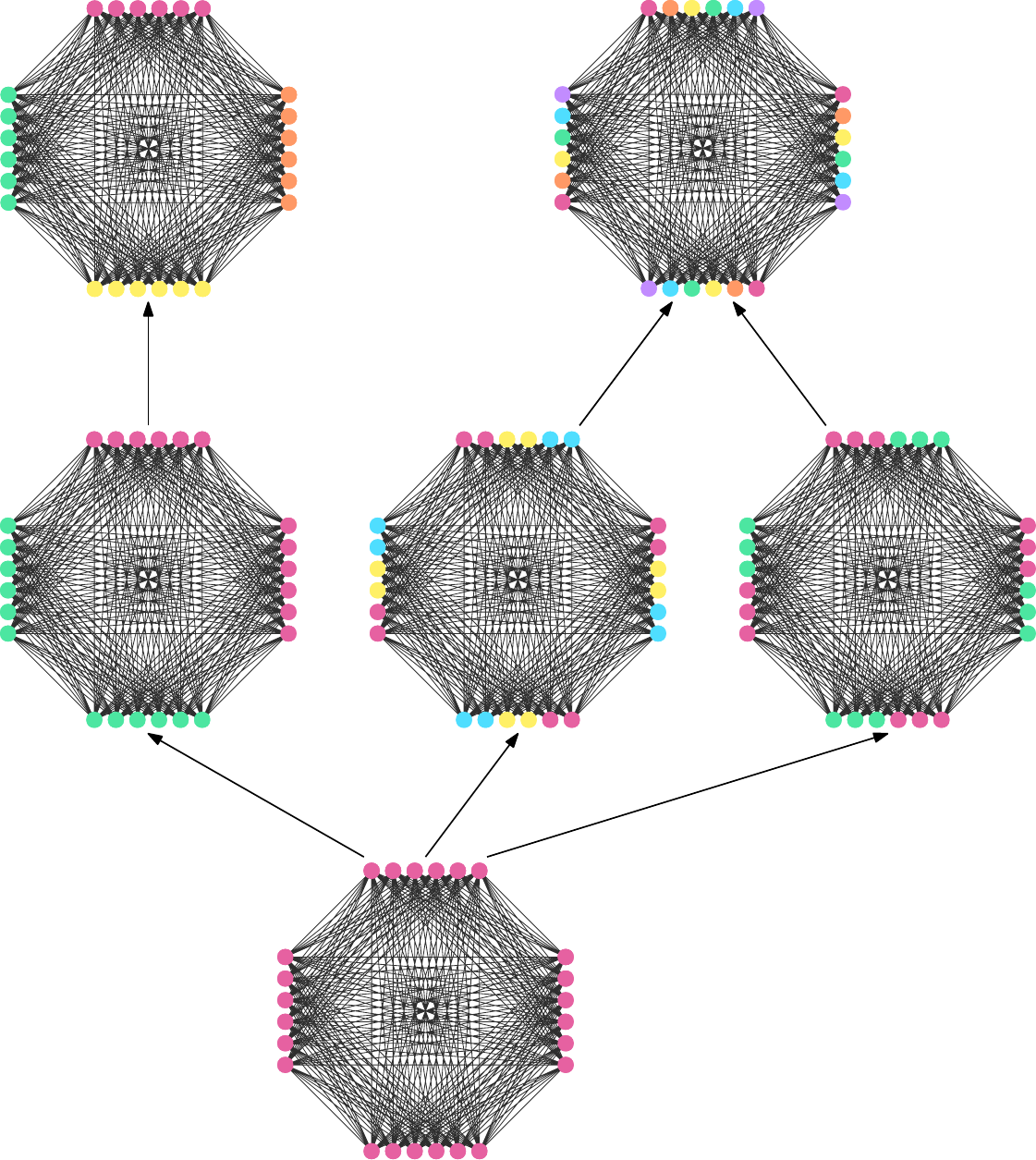}
    \caption{The partial order of all FAT colorings of the regular Turán graph $T(24,4)$.}
    \label{fig:po}
\end{figure}

\begin{example}\label{ex:Turanalpha}
   By  Theorem \ref{thm:merging}, any FAT coloring of $T(N,t)$ is coarser than one of two irreducible FAT colorings, namely, the proper $t$-coloring and the balanced FAT $N/t$-coloring (Figure \ref{fig:po}). The corresponding parameters are $\alpha = 1/(t-1)$ and $\alpha = t/N$, respectively. Hence,  $T(N,t)$ admits a FAT coloring with parameter $\alpha$ for all $\alpha$ in the set
    \[
    \biggl\{ \ell \cdot \frac 1{t-1} \colon \ell\mid t \biggr\} \cup \biggl\{ \ell \cdot \frac tN \colon \ell\mid \frac{N}t\biggr\}.
    \]
\end{example}

Theorem \ref{thm:merging} shows that, once one knows the irreducible FAT colorings of $G$ and their corresponding parameters $\alpha$, all FAT colorings, and all of their parameters $\alpha$, can be derived from them.
In particular, any FAT coloring using $\chi^{\mathrm{FAT}}$ colors must itself be irreducible.\newline

We conclude this section by listing the graphs whose FAT colorings we have studied so far, together with their irreducible FAT colorings.

\begin{example}[Complete graphs; cf.\ Example \ref{ex:KN}]
    The complete graph $K_N$ on $N$ vertices admits (up to permutation of the coloring classes) a single irreducible FAT coloring: the proper $N$-coloring. In fact, the FAT colorings of $K_N$ are those that have equal-sized coloring classes, and all of those are coarser than a proper $N$-coloring.
\end{example}
\begin{example}[Trees; cf.\ Example \ref{ex:trees}]
    Every tree, and in particular every star graph (Figure \ref{fig:star}), admits, by Proposition \ref{prop:delta+1}, only one irreducible FAT coloring: the $2$-coloring induced by the bipartition.
\end{example}
\begin{example}[Petal graphs; cf.\ Example \ref{ex:petalgraphs}]
    Petal graphs admit a unique irreducible coloring: the $3$-coloring from Example \ref{ex:petalgraphs}, which is irreducible since $\chi^{\mathrm{FAT}}=3$.
    No FAT $2$-coloring exists: if the vertex $x$ of degree $N-1$ has color $1$ and one of its neighbors, say some $v_i$, receives color $2$, then the unique shared vertex $w_i$ cannot receive either color without violating the FAT condition.
\end{example}
\begin{example}[Book graphs; cf.\ Example \ref{ex:bookgraphs}]
    Each $m$-book graph admits an irreducible proper FAT $2$-coloring induced by its bipartition.
    If $m$ is even, it also admits an irreducible FAT $3$-coloring, since in that case $\chi^{\mathrm{FAT}}=3$.
\end{example}
\begin{example}[Cycle graphs; cf.\ Example \ref{ex:cyclegraphs}]
    The cycle graph $C_N$ admits an irreducible proper $2$-coloring when $N$ is even, induced by its bipartition.
    Moreover, if $N\equiv 0\mod 3$, then $C_N$ also admits an irreducible FAT $3$-coloring, since $\chi^{\mathrm{FAT}}=3$.
    Hence, $C_N$ admits two distinct irreducible FAT colorings when $N\equiv0\mod6$, and exactly one otherwise.
\end{example}
\begin{example}[Regular Turán graphs; cf.\ Definition \ref{def:Turangraph}, Theorem \ref{thm:Turan}, and Example \ref{ex:Turanalpha}]
    The regular Turán graph $T(N,t)$ admits exactly two irreducible colorings: the proper $t$-coloring and the balanced $N/t$-coloring.
\end{example}

\section{Open questions}\label{section:questions}

Our results suggest several natural directions for further study. We therefore conclude with a list of open problems.

\begin{question}
We have encountered examples where $\chi(G)=\chi^{\mathrm{FAT}}(G)$, as well as cases where one is strictly larger than the other (cf.\ Remark \ref{rmk:chi}). What is the maximum possible gap between the two parameters? In particular, is $\chi^{\mathrm{FAT}}(G)$ always bounded in terms of $\chi(G)$, or can the two diverge arbitrarily?
\end{question}

\begin{question}
What is the computational complexity of determining $\chi^{\mathrm{FAT}}(G)$? Is it NP-complete?
\end{question}

\begin{question}
What is the typical asymptotic behavior of $\chi^{\mathrm{FAT}}(G)$ for random regular graphs?
\end{question}

\begin{question}
Which variants of FAT colorings are worth pursuing? Possible directions include list colorings, fractional versions, edge colorings, or relaxations where the tolerance parameter $\beta$ is fixed but the fairness parameter $\alpha$ is omitted.
\end{question}

\begin{question}
By Theorem~\ref{thm:gcd}, FAT chromatic numbers do not satisfy monotonicity: if $H$ is a subgraph of $G$, it does not necessarily follow that $\chi^{\mathrm{FAT}}(H)\leq \chi^{\mathrm{FAT}}(G)$. Under which conditions does monotonicity hold?
\end{question}

\begin{question}
Can Theorem \ref{thm:spectrumAK} be extended to non-regular graphs? In other words, is there an analogue of Theorem \ref{thm:lambda} for the adjacency matrix and for the Kirchhoff Laplacian?
\end{question}

\begin{question}
Can Theorem \ref{thm:Turan}, which characterizes the FAT chromatic number of regular Turán graphs, be extended to all complete multipartite graphs?
\end{question}

\begin{question}
Given a positive integer $b$, does there exist a graph that admits exactly $b$ irreducible FAT colorings?
\end{question}

\begin{question}
    Given any $k\geq1$ and $\alpha\leq1/(k-1)$, can we find a graph admitting a FAT $k$-coloring with parameter $\alpha$?
\end{question}

\begin{question}
    Given any $k\geq1$ and $\alpha\leq1/(k-1)$, can we find a graph with FAT coloring number $k$, admitting a FAT $k$-coloring with parameter $\alpha$?
\end{question}

\subsection*{Acknowledgments}
Some of the ideas presented in this paper were developed during one of our visits to the Alfréd Rényi Institute of Mathematics in Budapest. We are grateful to Ágnes Backhausz for the valuable discussions during that visit.

\section*{Funding}
Raffaella Mulas is supported by the Dutch Research Council (NWO) through the grant VI.Veni.232.002.

\bibliographystyle{plain} 

\bibliography{Bibliography}

\end{document}